\documentclass[12pt]{article}

\usepackage{lmodern}
\usepackage{setspace}
\usepackage[table]{xcolor}
\usepackage{nicefrac} 
\usepackage{amsmath, amssymb, amsthm, graphicx,bm,mathtools}
\usepackage{cite,url}
\usepackage{caption}
\usepackage{subcaption}
\usepackage{tikz}
\usetikzlibrary{tikzmark,bending}
\usepackage[ total={6.5in, 8.5in}]{geometry}
\usepackage{mathtools}
\usepackage{algorithm}
\usepackage{algpseudocode}
\usetikzlibrary{decorations.pathreplacing,calc}
\usetikzlibrary{arrows.meta}
\newcommand{\ruleset}[1]{\textsc{#1}}

\newcommand{\PN}[2]{\textnormal{PN}\ensuremath{\left(#1,#2\right)}}
\newcommand{\CN}[2]{\textnormal{CN}\ensuremath{\left(#1,#2\right)}}
\newcommand{\SN}[2]{\textnormal{SN}\ensuremath{\left(#1,#2\right)}}
\newcommand{\NN}[2]{\textnormal{NN}\ensuremath{\left(#1,#2\right)}}
\newcommand{\NNg}[3]
{\textnormal{NN}\ensuremath{\left(#1,#2,#3\right)}}
\newcommand{\IRP}{Invariance Reduction Process}
\newcommand{\defeq}{\overset{\mathrm{def}}{=\joinrel=}}
\DeclareMathOperator{\im}{im}

\def\N{\ensuremath{\mathcal{N}}}

\def\P{\ensuremath{\mathcal{P}}}

\def\A{\ensuremath{\mathcal{A}}}
\def\p{\ensuremath{\bm{p}}}

\def\z{\ensuremath{\bm{z}}}
\def\r{\ensuremath{\bm{r}}}

\def\Nim{\ruleset{Nim}}
\def\lc{\left\lceil}   
\def\rc{\right\rceil}
\def\lf{\left\lfloor}   
\def\rf{\right\rfloor}
\def\Dd{TwoDelta}

\newtheorem{theorem}{Theorem}[section]

\newtheorem{corollary}[theorem]{Corollary}
\newtheorem{lemma}[theorem]{Lemma}
\newtheorem{definition}[theorem]{Definition}
\newtheorem{remark}[theorem]{Remark}
\newtheorem{example}{Example}
\newtheorem{observation}{Observation}

\newtheorem{open}{Open Problem}

\newcommand{\cref}[1]{Corollary~\textup{\ref{#1}}}

\title{Necklace Games}
\author{Balaji R. Kadam, Silvia Heubach, Matthieu Dufour}

\begin{document}
\maketitle

\abstract{We define and give results on the game \ruleset{NecklaceNim} \NN{n}{k}, which is  \ruleset{PathNim} \PN{n}{k} with an additional move allowed on the end vertices.  This game arises as a sub-game in the context of solving \ruleset{CircularNim} \CN{n}{k}  when $k-2$ consecutive stacks have been depleted, therefore its solution is critical to solving \ruleset{CircularNim}. We solve the infinite families of \NN{n}{k} when play is allowed on at least half the stacks.}

\section{Introduction}\label{sec:intro}

The focus of this paper is the game of \ruleset{NecklaceNim}, which is one of the many variations of the game of \Nim~which started combinatorial game theory. \Nim~was introduced and solved by Bouton~\cite{Bo1901}. It is a game played on $n$ stacks of tokens. A move consists of selecting a single stack, and then removing at least one and as many as all tokens from the selected stack. Generalizations have taken on different directions, either changing the type of move or allowing \Nim~play on multiple stacks which may have a geometric arrangement. An example of the former is  \ruleset{Whythoff's~Nim}~\cite{Wy1907} played on two stacks, where one can either make a \Nim~move on a single stack or remove the same number of tokens from both stacks. The second type of games can be described as \ruleset{SimplicialNim} games, introduced in~\cite{ES1996}. We rephrase these games as \ruleset{SetNim} games because it simplifies the notation and  avoids a lot of algebraic definitions that are not needed in our context. Furthermore, as outlined in~\cite{KaDuHe2025}, the notion of invariance differs from the notion of circuits, and our focus will be on invariance. 

\begin{definition}
 The game \ruleset{SetNim} \SN{n}{\A} is played on a set of $n$ stacks of tokens which are placed on the vertices $V$ of a graph.\footnote{Vertices may be labeled as $1,\dots,n$, as $0,1,\dots, n-1$, or as  $a,b,c,\dots$, depending on what is convenient\\ to describe the move set. The labeling usually mirrors the way we describe the positions of the game.} The collection of allowed move sets $\A=\{A_1,A_2,\ldots,A_{\ell}\}$ with $\cup_{i=1}^{\ell} A_i =V$ defines the allowed moves, where each set $A_i$ in the collection represents a set of vertices that the player can play on. Playing on such a set means to take (in total) at least one token and as many as all tokens from the stacks of the particular set. We will only list the maximal allowable move sets. The last player to move wins. 
\end{definition}

The unique terminal position in \SN{n}{\A} is the position where all stacks are zero, which we will denote by $\bm{0}$. We start by giving some examples of known games that can be described as a \ruleset{SetNim}~game. Depending on the game, positions may be determined only up to symmetries such as rotations or reflections. 

\begin{example} \hfill
\begin{itemize}
    \item[-] \Nim~on $n$ stacks can be expressed as the game \SN{n}{\A_1}~where $\A_1$ is the set of singletons, $\{\{1\},\ldots,\{n\}\}$.
    \item[-]  \ruleset{Moore's $k$-Nim}~\textnormal{\cite{Mo1910}}~on $n$ stacks can be described as \SN{n}{\A_2}~where $\A_2$ is the collection of all $k$-element subsets of $\{1, \ldots,n\}$. 
    \item[-]   \ruleset{CircularNim} \CN{n}{k}~\textnormal{\cite{DuHe2013}}~is played on  $k$ consecutive vertices of a cycle graph with $n$ nodes. Thus, \CN{n}{k}~equals \SN{n}{\A_3}~with $$\A_3=\{\{i,i+1,\ldots, i+k-1\} \mid  i=0,\ldots, n-1\},$$ where the vertex labels in $\A_3$ are understood to be modulo $n$.
    \item[-] \ruleset{PathNim} \PN{n}{k}~\textnormal{\cite{KaDuHe2025}}~is played on $k$ consecutive vertices of a path with $n$ vertices. Thus \PN{n}{k} corresponds to \ruleset{SetNim}~\SN{n}{\A_4}~with 
$$\A_4=\{\{i,i+1,\ldots, i+k-1\} \mid  i=1,\ldots, n-k+1\}. $$
\end{itemize}
\end{example}

Motivated by sub-games that arise when solving \ruleset{CircularNim} games, for example, game $H$ in~\cite{KaDuHe2025} and Example 3.3 of~\cite{ES1996}, we introduce a new family of games that are of interest in their own right. 
These games correspond to  \ruleset{PathNim} games with an added move on the two end vertices, creating a necklace out of a string of ``pearls". 

\begin{definition}
The game \ruleset{NecklaceNim} \NN{n}{k} is played on stacks of tokens arranged on a path with $n \ge 2$ vertices. A move consists of playing either on $k\geq 2$ consecutive stacks or on the two end stacks, removing at least one token in total from the selected stacks. The last player to move wins. Thus,  \NN{n}{k} is  \ruleset{SetNim}~$\SN{n}{\mathcal{A}}$ with
\[
\mathcal{A} = 
   \bigl\{ \{i,i+1,\ldots,i+k-1\} \;\bigm|\; i=1,\ldots,n-k+1 \bigr\} 
   \cup \{\{1,n\}\}.
\]
\end{definition}

We will give results on the \ruleset{NecklaceNim} games listed in Figure~\ref{fig:necklace_sol}. Combinations of $n$ and $k$ shown in white are still unsolved. Note that \NN{n}{2}~has the same move sets as \CN{n}{2}, which is solved for $n \leq 5$ (see \cite{DuHe2013}).
\begin{figure}[!ht]
\centering

\begin{tikzpicture}[scale=0.85]

\def\c{1}

\foreach \i in {1,...,9} {       
  \foreach \j in {1,...,\i} {

    \ifnum\i=\j
      \fill[green!35] (\j*\c,-\i*\c) rectangle ++(\c,\c);
    \fi

    \ifnum\i=\numexpr\j+1\relax
      \fill[blue!35] (\j*\c,-\i*\c) rectangle ++(\c,\c);
    \fi

    \ifnum\i=\numexpr\j+2\relax
      \fill[red!35] (\j*\c,-\i*\c) rectangle ++(\c,\c);
    \fi

    \ifnum\i=\numexpr\j+3\relax
      \fill[orange!35] (\j*\c,-\i*\c) rectangle ++(\c,\c);
    \fi

    \ifnum\i=\numexpr\j+4\relax
      \fill[cyan!35] (\j*\c,-\i*\c) rectangle ++(\c,\c);
    \fi

    \ifnum\i=\numexpr\j+5\relax
      \fill[purple!35] (\j*\c,-\i*\c) rectangle ++(\c,\c);
    \fi


    \draw (\j*\c,-\i*\c) rectangle ++(\c,\c);
  }
}

\draw (1,-9)--(1,1);
\draw (0,0)--(11,0);
\draw (1,0)--(0,1);

\node at (0.25,0.25) {$n$};
\node at (0.75,0.75) {$k$};

\foreach \j in {2,...,10} {
  \node at (0.5,-\j+1.5) {\j};
}

\foreach \j in {2,...,10} {
  \node at (\j-0.5,0.5) {\j};
}
\fill[white]   (1,-4)    rectangle (2,-3);
\draw (1,-4)    rectangle (2,-3);

\draw[->, line width=1pt, blue]   (1.75,-1.75)--(8.5,-8.5);
\node[rotate=45, scale=0.7] at (1.5,-1.5) {{\footnotesize\textbf{NN(3,2)}}};

\draw[->, line width=1pt, red]    (2.75,-3.75)--(7.5,-8.5);
\node[rotate=45, scale=0.7] at (2.5,-3.5) {{\footnotesize\textbf{NN(5,3)}}};

\draw[->, line width=1pt, orange] (3.75,-5.75)--(6.5,-8.5);
\node[rotate=45, scale=0.7] at (3.5,-5.5) {{\footnotesize\textbf{NN(7,4)}}};

\draw[->, line width=1pt, cyan]   (4.75,-7.75)--(5.5,-8.5);
\node[rotate=45, scale=0.7] at (4.5,-7.5) {{\footnotesize\textbf{NN(9,5)}}};

\begin{scope}[yshift=-0.7cm]

\fill[blue!15]   (7,0)    rectangle (11,-0.7);
\draw           (7,0)    rectangle (11,-0.7);
\node[scale=0.85] at (9,-0.35) {{\footnotesize\textbf{NN(n,n-1)$\cong$NN(3,2)}}};

\fill[red!15]    (7,-0.7) rectangle (11,-1.4);
\draw           (7,-0.7) rectangle (11,-1.4);
\node[scale=0.85] at (9,-1.05) {{\footnotesize\textbf{NN(n,n-2)$\cong$NN(5,3)}}};

\fill[orange!15] (7,-1.4) rectangle (11,-2.1);
\draw           (7,-1.4) rectangle (11,-2.1);
\node[scale=0.85] at (9,-1.75) {{\footnotesize\textbf{NN(n,n-3)$\cong$NN(7,4)}}};

\fill[cyan!15]   (7,-2.1) rectangle (11,-2.8);
\draw           (7,-2.1) rectangle (11,-2.8);
\node[scale=0.85] at (9,-2.45) {{\footnotesize\textbf{NN(n,n-4)$\cong$NN(9,5)}}};

\end{scope}

\fill[white] (1*\c,-5*\c) rectangle ++(\c,\c); 
\draw        (1*\c,-5*\c) rectangle ++(\c,\c);

\fill[white] (1*\c,-6*\c) rectangle ++(\c,\c); 
\draw        (1*\c,-6*\c) rectangle ++(\c,\c);

\fill[white] (2*\c,-6*\c) rectangle ++(\c,\c); 
\draw        (2*\c,-6*\c) rectangle ++(\c,\c);

\fill[white] (2*\c,-7*\c) rectangle ++(\c,\c); 
\draw        (2*\c,-7*\c) rectangle ++(\c,\c);

\fill[white] (3*\c,-8*\c) rectangle ++(\c,\c); 
\draw        (3*\c,-8*\c) rectangle ++(\c,\c);

\draw (4,-9)--(5,-8);
\draw (4,-8.5)--(4.5,-8);
\draw (4,-8.75)--(4.75,-8);
\draw (4,-8.25)--(4.25,-8);
\draw (4.25,-9)--(5,-8.25);
\draw (4.5,-9)--(5,-8.5);
\draw (4.75,-9)--(5,-8.75);

\draw (3,-7)--(4,-6);
\draw (3,-6.5)--(3.5,-6);
\draw (3,-6.75)--(3.75,-6);
\draw (3,-6.25)--(3.25,-6);
\draw (3.25,-7)--(4,-6.25);
\draw (3.5,-7)--(4,-6.5);
\draw (3.75,-7)--(4,-6.75);

\draw (2,-5)--(3,-4);
\draw (2,-4.5)--(2.5,-4);
\draw (2,-4.75)--(2.75,-4);
\draw (2,-4.25)--(2.25,-4);
\draw (2.25,-5)--(3,-4.25);
\draw (2.5,-5)--(3,-4.5);
\draw (2.75,-5)--(3,-4.75);

\draw (1,-3)--(2,-2);
\draw (1,-2.5)--(1.5,-2);
\draw (1,-2.75)--(1.75,-2);
\draw (1,-2.25)--(1.25,-2);
\draw (1.25,-3)--(2,-2.25);
\draw (1.5,-3)--(2,-2.5);
\draw (1.75,-3)--(2,-2.75);

\end{tikzpicture}
   \caption{Solved games are shown in color.  Arrows indicate the games that reduce to the anchor game listed at the top of the arrow. Crosshatched games share the structure of the \P-positions of the associated anchor game.  }
    \label{fig:necklace_sol}
\end{figure}
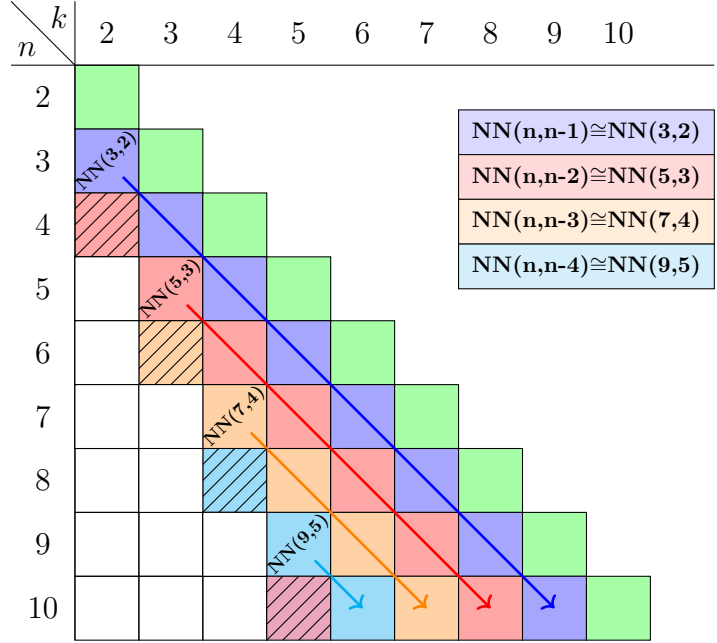

\section{Main Results}\label{sec:results}
We solve \ruleset{NecklaceNim} games when $k \geq n/2$.  There are two components to this solution. The first is that we can reduce infinite families of games to an anchor game, the smallest game in that family that satisfies $k \geq n/2$. The second part of the solution consists of solving those anchor games. 

\begin{theorem} \label{thm:anchor} The game \NN{n}{n-\ell} with $n \ge 4$ and $1\leq \ell < \lfloor n/2 \rfloor$  reduces to \NN{2\ell+1}{\ell+1}.
\end{theorem}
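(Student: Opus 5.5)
The plan is to exhibit an explicit token-count-preserving map from positions of \NN{n}{n-\ell} to positions of \NN{2\ell+1}{\ell+1}, and to show that it intertwines the move relations exactly. Here ``reduces to'' is read as: the \P-positions (indeed the Grundy values) of the larger game are the preimages of those of the anchor game under this map.

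\emph{Step 1: the common core.} Write $k=n-\ell$. The allowed windows are $W_i=\{i,\dots,i+k-1\}$ for $i=1,\dots,\ell+1$, together with $\{1,n\}$. Since $\ell<\lfloor n/2\rfloor$, we have $\ell\le \lfloor n/2\rfloor-1$, so the middle block $M=\{\ell+1,\dots,n-\ell\}$ is nonempty (it has at least $2$ vertices). Every $W_i$ contains all of $M$, and the end move $\{1,n\}$ contains none of $M$, since $1\le \ell$ and $n\ge n-\ell+1$. Outside $M$ the structure is as follows. With the left vertices $L=\{1,\dots,\ell\}$ and the right vertices $R=\{n-\ell+1,\dots,n\}$, the window $W_i$ meets $L$ in $\{i,\dots,\ell\}$ and meets $R$ in $\{n-\ell+1,\dots,n-\ell+i-1\}$. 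In the anchor game \NN{2\ell+1}{\ell+1}, the windows are $\{i,\dots,i+\ell\}$ for $i=1,\dots,\ell+1$. Each of these contains the single middle vertex $\ell+1$, meets the left part $\{1,\dots,\ell\}$ in $\{i,\dots,\ell\}$ and meets the right part in $\{\ell+2,\dots,\ell+i\}$. The end move is $\{1,2\ell+1\}$.

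\emph{Step 2: the map.} Define $\varphi$ on positions $\p=(p_1,\dots,p_n)$ by
\[
\varphi(\p)=\Bigl(p_1,\dots,p_\ell,\;\sum_{j\in M}p_j,\;p_{n-\ell+1},\dots,p_n\Bigr).
\]
This map sends $L$ and $R$ to the left and right parts of the anchor game in an order-preserving way and collapses $M$ to the middle vertex. By Step 1, $\varphi$ induces a bijection between the move sets of the two games: $W_i$ corresponds to $\{i,\dots,i+\ell\}$, and $\{1,n\}$ corresponds to $\{1,2\ell+1\}$.

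\emph{Step 3: moves correspond.} First, if $\p\to\bm{q}$ is a legal move on a set $A$, then $\varphi(\p)\to\varphi(\bm{q})$ is a legal move on the image of $A$. The total number of tokens removed is unchanged, and the change is supported on the image of $A$. Second, every move $\varphi(\p)\to\r$ lifts to a move $\p\to\bm{q}$ with $\varphi(\bm{q})=\r$. The only issue is the middle coordinate: if $r$ removes $t\le\sum_{j\in M}p_j$ tokens from the middle, distribute those $t$ removals among the stacks of $M$ in any way that does not exceed their sizes. This is legal because the chosen window contains all of $M$. The end move never touches $M$, so it lifts trivially.

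\emph{Step 4: induction.} By induction on the total number of tokens, which $\varphi$ preserves, I would show that $\mathcal{G}(\p)=\mathcal{G}(\varphi(\p))$. Step 3 shows that the sets of Grundy values of the options coincide, so the mex values agree. In particular $\p$ is a \P-position exactly when $\varphi(\p)$ is. The hypothesis $n\ge 4$ only guarantees that the range $1\le\ell<\lfloor n/2\rfloor$ is nonempty.

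The only point needing care is Step 1: checking that the strict inequality $\ell<\lfloor n/2\rfloor$, rather than $\ell\le n/2$, is exactly what forces every window to contain all of $M$ while the end move avoids $M$. The rest is the routine lifting argument.
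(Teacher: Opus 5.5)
Your proof is correct and follows the same route as the paper. Both arguments observe that every $(n-\ell)$-window contains the block $M=\{\ell+1,\dots,n-\ell\}$, which has $n-2\ell\ge 2$ vertices, while the end move $\{1,n\}$ misses it, and both then merge $M$ into a single center stack. The only difference is that you prove outcome preservation directly through your lifting argument (even at the level of Grundy values), whereas the paper simply invokes the merge reduction and its outcome-preservation lemma from the earlier \ruleset{PathNim} paper.
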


\begin{corollary}\label{cor:k=n-1,2} 
    The set of $\P$-positions of the games \NN{n}{n-1} and  \NN{n}{n-2} are given by 
    \begin{align*}
    \P_{\NN{n}{n-1}}& =\{\p \mid p_1=p_2+\dots+p_{n-1}=p_n\} \textnormal{ for }n \geq 3\\
    \intertext{and}
    \P_{\NN{n}{n-2}}&=\{\p \mid p_1=p_3+\dots+p_{n-1},~p_n=p_2+\dots+p_{n-2}\} \textnormal{ for }n \geq 4,
    \end{align*} 
    respectively.
\end{corollary}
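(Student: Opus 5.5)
The plan is to combine Theorem~\ref{thm:anchor} with direct solutions of the two small anchor games \NN{3}{2} (for $\ell=1$) and \NN{5}{3} (for $\ell=2$). I will interpret ``reduces to'' in the natural way: the middle stacks not reachable by the extra end move collapse into a single combined stack.

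For $k=n-1$, the move sets are $\{1,\dots,n-1\}$, $\{2,\dots,n\}$ and $\{1,n\}$. Stacks $2,\dots,n-1$ always appear together, so only their sum matters; this is the reduction to \NN{3}{2} with middle stack $p_2+\dots+p_{n-1}$. For $k=n-2$, the sets are $\{1,\dots,n-2\}$, $\{2,\dots,n-1\}$, $\{3,\dots,n\}$ and $\{1,n\}$. Stacks $3,\dots,n-2$ always occur together, giving \NN{5}{3} with stacks $(p_1,p_2,S,p_{n-1},p_n)$, where $S=p_3+\dots+p_{n-2}$. In this form the claimed conditions read $p_1=S+p_{n-1}$ and $p_n=p_2+S$, which match the formulas in the statement. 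For $n=3,4$ the claims are just the anchor games themselves ($n=4$ with an empty middle block, $S=0$). The cases $n\ge 5$ and $n\ge 6$ follow from Theorem~\ref{thm:anchor}, since the hypothesis $\ell<\lfloor n/2\rfloor$ holds there.

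For \NN{3}{2}, with sets $\{1,2\},\{2,3\},\{1,3\}$, I will show that $\P=\{p_1=p_2=p_3\}$ by the standard two-part check.

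First, every move from $(a,a,a)$ with $a>0$ leaves a position that is not in $\P$. A move touches exactly two stacks and removes at least one token, so it cannot keep all three equal.

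Second, every position with $p_1,p_2,p_3$ not all equal has a move into $\P$. Let $m=\min$, attained at some stack $i$. Play on the other two stacks and reduce both to $m$; this is legal because they form one of the move sets, and at least one of them exceeds $m$.

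For \NN{5}{3}, with stacks $(a,b,c,d,e)$ and sets $\{1,2,3\},\{2,3,4\},\{3,4,5\},\{1,5\}$, I will verify that $\P=\{a=c+d,\ e=b+c\}$.

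First, no move preserves both equations. A move on $\{1,5\}$ changes $a$ or $e$ alone, each of which appears in exactly one equation with nothing else changing there. A move on $\{1,2,3\}$ must keep $e=b+c$, so it leaves $b+c$ unchanged and hence cannot alter $b,c$ at all; then $a$ must change, breaking $a=c+d$. The set $\{3,4,5\}$ is symmetric. For $\{2,3,4\}$, both $c+d$ and $b+c$ are preserved; if $c$ drops by $t\ge 0$, then $b$ and $d$ must each rise by $t$, which is impossible unless $t=0$ and nothing changes.

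The second part, finding a move from every position not in $\P$ into $\P$, is the main obstacle. I will split into cases by comparing $a$ with $c+d$ and $e$ with $b+c$.

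In the case $a\ge c+d$ and $e\ge b+c$ (not both equal), play on $\{1,5\}$ to set $a=c+d$ and $e=b+c$.

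In the case $a<c+d$ and $e\ge b+c$, I would first try the move on $\{3,4,5\}$. Lower $c$ and $d$ to $c',d'$ with $c'+d'=a$, choosing $c'$ as large as possible ($c'=\min(c,a)$, $d'=a-c'$, which requires $d'\le d$, true since $a<c+d$). Then set $e'=b+c'\le b+c\le e$. The symmetric case is handled by $\{1,2,3\}$.

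In the remaining case $a<c+d$ and $e<b+c$, use $\{2,3,4\}$. Choose $c'\le \min(a,e,c)$ and $b'=e-c'$, $d'=a-c'$, and we need $b'\le b$, $d'\le d$, $c'\le c$. Taking $c'=\min(c,a,e)$ should work: if $c'=c$ the inequalities hold directly, and if $c'=a$ then $d'=0$ and $b'=e-a$, which needs $e-a\le b$. That last inequality is the delicate spot. If it fails, I would fall back on the moves from $\{1,2,3\}$ or $\{3,4,5\}$. Checking these boundary subcases carefully is where I expect the real work to lie.
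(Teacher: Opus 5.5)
Your route is sound, and it differs from the paper's mainly in being self-contained. After Theorem~\ref{thm:anchor}, the paper imports the anchor solutions: \NN{3}{2}$\cong$\CN{3}{2} (equal heights), and \NN{5}{3} from Example~3.3 of \cite{ES1996}. It treats $n=4$ separately via \NN{4}{2}$\cong$\CN{4}{2}, whose \P-positions are $(a,b,a,b)$. You instead prove both anchor solutions with the two-part characterization. You also obtain \NN{4}{2} as the subspace of \NN{5}{3} with zero center stack. This is correct: deleting stack $3$ leaves the move sets $\{1,2\},\{2,4\},\{4,5\},\{1,5\}$ of a $4$-cycle, the same device the paper uses when proving Theorem~\ref{thm:redgamen}. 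Your approach buys independence from the \ruleset{CircularNim} and simplicial literature, at the price of a case analysis for \NN{5}{3}. What you carry out is correct: the \NN{3}{2} argument, the proof that no move stays inside $\{a=c+d,\ e=b+c\}$, and the cases $a\ge c+d,\ e\ge b+c$ and $a<c+d,\ e\ge b+c$ together with its mirror. (Bookkeeping: the anchors sit at $n=3$ and $n=5$. Theorem~\ref{thm:anchor} already covers \NN{4}{3}, and only \NN{4}{2} needs your zero-center remark.)

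The gap is the subcase you flagged, and a better choice of $c'$ cannot fix it. Suppose $a<c+d$, $e<b+c$, $a\le\min(c,e)$ and $e>a+b$. Then no move on $\{2,3,4\}$ reaches $\P$. Such a move keeps $a$ and $e$, so $c'+d'=a$ forces $c'\le a$, and then $b'=e-c'\ge e-a>b$. The position $(0,0,5,0,3)$ is an example. You must therefore commit to the fallback and verify it: play on $\{3,4,5\}$ to $(a,b,a,0,a+b)$. This is legal since $a\le c$, $0\le d$ and $a+b<e$, and the result lies in $\P$. You also omitted the mirror subcase $e\le\min(c,a)$, $a>d+e$. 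It is settled by playing on $\{1,2,3\}$ to $(d+e,0,e,d,e)$. In the remaining subcases ($c'=c$; $c'=a$ with $e\le a+b$; $c'=e$ with $a\le d+e$), your $\{2,3,4\}$ move is legal and removes at least one token. With these two additions the argument is complete.
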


We will refer to the game \NN{2\ell+1}{\ell+1} as the \emph{anchor game} for the infinite family  \NN{n}{n-\ell} with $1 \leq \ell < \lfloor n/2 \rfloor$. This anchor game and the game  \NN{2 \ell}{\ell} share a common structure of the set of \P-positions.

\begin{theorem} \label{thm:redgamen} Let $\p=(p_1,\dots,p_n)$ and $n \geq 3$. Then the games \NN{n}{k} with $k=\lceil n/2 \rceil$, that is the \ruleset{NecklaceNim} games \NN{2\ell}{\ell} and  \NN{2\ell+1}{\ell+1}, have \P-positions given by
\[\P_{n,k}= \{\p\mid  p_1+\dots+ p_{\lf n/2\rf}=p_{\lc n/2\rc+1}+\dots +p_{n}, \,\min( p_1,p_n)=\min_{i=2,\dots,\ell+1}(s_i)\},\]
where $s_i=p_i + p_{i+1} + \dots + p_{i+k-2}$. 
\end{theorem}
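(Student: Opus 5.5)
The plan is the standard characterization argument: write $\P_{n,k}$ for the proposed set and prove two things. First, no move leads from a position in $\P_{n,k}$ to another position in $\P_{n,k}$. Second, from every position outside $\P_{n,k}$ there is a move into $\P_{n,k}$. Since $\bm{0}\in\P_{n,k}$, these two facts identify $\P_{n,k}$ as the set of \P-positions.

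Throughout, write $L=p_1+\dots+p_{\lfloor n/2\rfloor}$ and $R=p_{\lceil n/2\rceil+1}+\dots+p_n$ for the two half sums. Write $m=\min(p_1,p_n)$ and $M=\min_{2\le i\le \ell+1} s_i$. The sets $s_i$ are exactly the $\ell$ windows of length $k-1$ lying inside the interior vertices $2,\dots,n-1$. I would treat $n=2\ell$ and $n=2\ell+1$ in parallel. In the odd case the middle vertex $\ell+1$ belongs to neither half and lies in every $s_i$, which is what makes the two cases share one description.

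The first step is to record the geometry of the move sets.
\begin{itemize}
\item The move on $\{1,n\}$ changes $m$ and leaves every $s_i$ fixed.
\item A $k$-window starting at $1$ contains $s_2$ together with vertex $1$.
\item A $k$-window ending at $n$ contains $s_{\ell+1}$ together with vertex $n$.
\item An interior $k$-window $[i,i+k-1]$ contains the two consecutive windows $s_i$ and $s_{i+1}$.
\item Every $k$-window meets both halves, or the middle vertex when $n$ is odd.
\end{itemize}

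For the first property, start from $\p\in\P_{n,k}$. A move on $\{1,n\}$ must remove equal amounts from $p_1$ and $p_n$ to keep $L=R$. It therefore strictly lowers $m$ while $M$ stays the same, so the result is not in $\P_{n,k}$. For a move on a $k$-window, keeping $L=R$ forces the removal to be balanced, with $x>0$ tokens taken from each half (the middle vertex can absorb extra tokens when $n$ is odd). I would then argue that such a move either lowers an end stack without lowering $M$ by the same amount, or lowers the minimizing $s_i$ while leaving $m$ fixed. The point to exploit is that the two windows $s_i$ and $s_{i+1}$ inside the move window each contain all of the removed tokens on one side.

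The delicate issue, which I expect to be the \textbf{main obstacle}, is the following: the minimum $M$ may be attained at some $s_j$ not touched by the move. To handle it I would first prove an auxiliary structural lemma about positions in $\P_{n,k}$. Using $L=R$ together with $m=M$, I would try to show that the $s_j$ attaining the minimum, or $p_1$ and $p_n$ themselves, are forced to be hit by any balanced window move. Concretely, I would compare $s_j$ with $s_{j+1}$ through $s_{j+1}-s_j=p_{j+k-1}-p_j$. I would then telescope these differences against $L-R$ to show that a window avoiding the minimizer would make $m$ drop below $M$ or break $L=R$.

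For the second property, take $\p\notin\P_{n,k}$ and split into cases.
\begin{itemize}
\item If $L=R$ but $m>M$: remove $m-M$ tokens from each of $p_1$ and $p_n$ via the $\{1,n\}$ move.
\item If $L=R$ but $m<M$: say $p_1=m$. Use a $k$-window containing the minimizing $s_i$ to lower that $s_i$ to $m$, while removing equal amounts from the two halves. The middle vertex, or the window structure, should allow this balancing.
\item If $L\neq R$, say $L>R$: use a $k$-window starting at $1$, which is almost entirely on the left side. Reduce $L$ to $R$ and simultaneously adjust $p_1$ and $s_2$ so that $\min(p_1,p_n)$ equals the new minimum of the $s_i$. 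Here the choice of how many tokens to take from $p_1$ versus the interior is a one-parameter adjustment, and it should be checked by a continuity-style argument on integers.
\end{itemize}
These cases will be verified by inequalities only, with small $n$ such as \NN{3}{2} and \NN{4}{2} used as sanity checks.
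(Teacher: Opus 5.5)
You have the right overall framework, but the second half of your plan, which carries the content of the theorem, has a genuine gap, and one of its cases is false as stated.

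\textbf{First half.} This is essentially the paper's move-by-move analysis. The ``main obstacle'' you anticipate does not arise, and no telescoping lemma is needed. Take a move window $\{j,\dots,j+k-1\}$ and any $t\in\{2,\dots,\ell+1\}$. If $t\le j$, the window $s_t$ contains \emph{all} left-half stacks of the move window; if $t\ge j+1$, it contains \emph{all} right-half stacks. A move preserving $L=R$ removes the same $x$ from both halves, and if $x=0$ it removes tokens from the center, which lies in every $s_t$. Either way every $s_t$ drops while $p_1,p_n$ stay fixed, so (ME) fails. Your bullet that every $k$-window meets both halves is false for $\{1,\dots,\ell\}$ and $\{\ell+1,\dots,2\ell\}$ when $n=2\ell$, but those moves simply break $L=R$.

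\textbf{Case $L=R$, $m<M$.} Only balanced interior moves can work here. The whole difficulty is to bring the minimum down to exactly $m$ without pushing another $s_t$ below $m$. Windows containing both reduced portions lose up to $2d$ and others less, so both the window and the split of the removal must be chosen carefully. ``Should allow this balancing'' is not an argument, and choosing a $k$-window containing the minimizing $s_i$ is not enough. In \NN{10}{5}, take $(2,15,8,2,0,4,5,5,5,8)$, where $L=R=27$, $m=2$ and $M=s_4=11$. The window $\{4,\dots,8\}$ contains $s_4$ but can remove at most $2$ tokens from the left half, so every $s_t$ stays $\ge 7$. Only $\{3,\dots,7\}$ works, e.g.\ the move to $(2,15,3,0,0,0,2,5,5,8)$. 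The paper's $\delta$-Algorithm does exactly this work: it reduces $a_x,b_y$ outward from $a_\ell,b_1$, uses that $M>m\ge 0$ rules out $\ell-1$ consecutive interior zeros, and finishes with a parity fix when $\delta=1$. In the odd case the center stack only helps when $c\ge M-m$.

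\textbf{Case $L\ne R$.} A window starting at vertex~$1$ need not suffice.
\begin{itemize}
\item In \NN{4}{2}$\,\cong\,$\CN{4}{2}, from $(10,1,1,5)$ the only winning move is the endpoint move to $(1,1,1,1)$. A move on $\{1,2\}$ would need $\min(p_1',5)=\min(p_2',1)\le 1$, which means removing at least $9>\Delta=5$ tokens from the left half.
\item In \NN{10}{5}, from $(2,15,8,4,5,4,5,5,5,8)$ (with $\Delta=7$, $m=2$, $M=18$), every left-end move removing $7$ tokens leaves all $s_t\ge 11$. So a middle move touching both halves with unequal removals is forced, again to $(2,15,3,0,0,0,2,5,5,8)$.
\end{itemize}
Thus $L\ne R$ must be split further by the sign of $m-M$ and by which end stack attains $m$. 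It needs endpoint moves, one-sided moves and unbalanced middle moves; this is what the paper's cheat sheet and its TwoDelta-, $\Delta$- and $\delta$-Algorithms organize.

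\textbf{The paper's route.} The paper's proof of the theorem avoids this construction entirely by induction on $\ell$. The invariant vectors $\z_{\ell+1}$ (odd $n$) and $\z_2,\dots,\z_\ell$ (even $n$) send every position into a subspace. There, zero and merge reductions give either a smaller \ruleset{NecklaceNim} game or one of \PN{2\ell}{\ell+1}, \PN{2\ell-1}{\ell}, \PN{3}{2}, \PN{4}{2}, whose \P-positions are already known.
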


\section{Reduction Techniques and Prior Results}\label{sec:tools}
We will use techniques introduced in~\cite{KaDuHe2025} to reduce a specific \ruleset{NecklaceNim} game to a number of smaller, known sub-games and then utilize the results for the sub-games. The first step in this process is to use invariant vectors to create zero stacks and then to eliminate those stacks from consideration. If possible, we will merge additional stacks in the process. The resulting sub-games preserve the type of a position (see Remark~2.3 and Lemma~2.5 in~\cite{KaDuHe2025}). 

Before we go into the details of the reduction process, we will establish the notation that will be used throughout. Depending on what is more convenient to cover the cases for \NN{2\ell+1}{\ell+1}~and  \NN{2\ell}{\ell}~together, we refer to a position $\p$ by using either the generic $\p=(p_1,\dots, p_n)$ or $\p=(a_1,\dots,a_{\ell},(c),b_1,\dots,b_{\ell})$, where the \emph{center stack} $c$ only occurs when $n$ is odd.    Let $$A=\sum_{i=1}^{\ell} a_i=\sum_{i=1}^{\lfloor n/2\rfloor} p_i, \quad B=\sum_{i=1}^{\ell} b_i=\sum_{i=\lc n/2 \rc +1}^n p_i,$$
 $$ m=\min(p_1,p_n)=\min(a_1,b_{\ell}),\quad s^*=\min_{i=2,\dots,\ell+1}(s_i),$$ 
 where $s_i=p_i + p_{i+1} + \dots + p_{i+k-2}$. We use  $A',B',m'$ and $(s^*)'$ to denote the corresponding quantities for the option $\p'$.  We refer to the stacks $a_1,\dots,a_\ell$ as the \emph{$A$-side of the position}, the stacks $b_1,\dots,b_{\ell}$ as the \emph{$B$-side}, and the stacks $p_t,\dots,p_{t+k-2}$ as the \emph{minimum window}, where $t$ is  the smallest index for which $s_t=s^*$. 
 We will refer to $a_1$ and $b_{\ell}$ as the \emph{end stacks}. In addition, let $S_{\ell}$ be the set of positions that satisfy the following two conditions:
$$\quad \textnormal{\emph{sum equation} (SE):} \quad A = B \quad\quad \textnormal{ \emph{minimum equation} (ME):} \quad m=s^*.$$
For ease of readability, we will use the notation $G\defeq \A$ as a shorthand for ``$G$ is the game \SN{n}{\A} with allowed move sets $\A$'' and $G\cong \tilde{G}$ if the games $G$ and $\tilde{G}$ are isomorphic, that is, they have the same move sets. \\

In the proof of our main theorem, \ruleset{PathNim}~games will play a critical role because the relevant \ruleset{NecklaceNim}~games either reduce to smaller \ruleset{NecklaceNim}~games or to  \ruleset{PathNim}~games. We list the result on the \P-positions of \ruleset{PathNim}~games  here for easy reference. 

\begin{theorem}[Theorem 4.1~\cite{KaDuHe2025}] \label{thm:P-pos path} If $k \geq \lceil \frac{n}{2}\rceil$, that is, when play is allowed on at least half of the stacks, then  
\[
\P_{n,k}=\{(a_1,\ldots,a_{u},\underbrace{0,\ldots,0}_{k-1},b_1,\ldots,b_{v})\ |  \sum_{i=1}^{u} a_i=\sum_{j=1}^v b_j, \min\{u,v\}\geq 1\}.
\]
In particular, $\P_{n,n}=\{(0,\ldots,0)\}$. 
\end{theorem}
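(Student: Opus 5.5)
The statement to prove is the last one before the cut, Theorem~\ref{thm:P-pos path} (Theorem~4.1 of~\cite{KaDuHe2025}), the \P-positions of \PN{n}{k} for $k\ge\lceil n/2\rceil$. Since it is quoted from earlier work, I will re-derive it by the standard characterization. Let $\P_{n,k}$ be the proposed set. I must show (i) every move from a position in $\P_{n,k}$ leaves $\P_{n,k}$, and (ii) every position outside $\P_{n,k}$ other than $\bm 0$ has a move into $\P_{n,k}$. Here $\bm 0\in\P_{n,k}$ with all $a_i,b_j=0$. The key structural fact is that $k\ge\lceil n/2\rceil$ forces every move window $\{i,\dots,i+k-1\}$ to contain the middle stacks $\lfloor n/2\rfloor+1,\dots,\lceil n/2\rceil$ (when $k>n/2$), or at least to meet the center.

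For a position $\p\in\P_{n,k}$ I write it as $(a_1,\dots,a_u,0^{k-1},b_1,\dots,b_v)$. The representation is not unique when zeros sit adjacent to the block, so I fix a canonical form: the block of $k-1$ zeros is placed at some admissible offset. Equivalently, I use the invariant formulation: $\p\in\P_{n,k}$ iff some window of $k-1$ consecutive stacks is all zero, with left sum equal to right sum and both sides nonempty ($u=v=0$ only when $n=k-1$, excluded).

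For (i), any window of length $k$ starting at $i$ must overlap the zero block. If it removes tokens, it removes them from only one side, because $k$ consecutive stacks cannot reach both the $a$-side and the $b$-side past $k-1$ zeros. Thus the equal sums become unequal. I also need to check that no other zero block of length $k-1$ is created that witnesses membership. I would argue that any zero block of length $k-1$ lies within the middle region, since $u,v\le n-k+1-1\le k-1$. From this, all such blocks share the same left/right split up to zeros, so the sums compared are the same.

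For (ii), take $\p\notin\P_{n,k}$. My intended strategy is a centred window $W=\{i,\dots,i+k-1\}$ chosen so that we zero out $k-1$ of its stacks and adjust the remaining endpoint to balance sums. Concretely, I consider the windows $W_i$ and the move that empties positions $i,\dots,i+k-2$ and reduces $p_{i+k-1}$. After this move the left sum is $L_i=p_1+\dots+p_{i-1}$ and the right sum is $R=p_{i+k-1}'+\dots+p_n$. This works when $L_i\le p_{i+k-1}+\dots+p_n$ and $L_i\ge p_{i+k}+\dots+p_n$. The symmetric move (empty $i+1,\dots,i+k-1$, reduce $p_i$) covers the mirror inequalities. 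A discrete intermediate-value argument over $i=1,\dots,n-k+1$ then gives some $i$ where one of the two inequalities holds. The reason is that $L_i$ increases in $i$ while the tail sums decrease, and at $i=1$ we have $L_1=0$ while at $i=n-k+1$ the tail is $0$.

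The main obstacle is ensuring the move genuinely removes at least one token, and handling the boundary cases where $u$ or $v$ would be $0$ (requiring $\min\{u,v\}\ge1$). If the balancing move removes nothing, the position already lies in $\P_{n,k}$, a contradiction. The degenerate endpoint windows $i=1$ and $i=n-k+1$ need a separate check using $k\ge\lceil n/2\rceil$ so that both sides stay nonempty. Finally, $k=n$ gives $\P_{n,n}=\{\bm 0\}$ directly, since one move can empty everything.
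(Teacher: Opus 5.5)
The paper gives no proof of this statement. It is quoted from \cite{KaDuHe2025} and used as a black box, so there is no in-paper argument to compare yours with. Judged on its own, your direct verification of the two defining properties is correct. It is a more elementary route than the reduction-based toolkit of the surrounding paper, and it buys an explicit winning move and a clear view of where the hypothesis is used. A few repairs are needed.

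In (i), the inequality $u,v\le n-k\le k-1$ is false when $n=2k$, since then $n-k=k$. Likewise, a $k$-window need not overlap the zero block: for $n=2k$ and offset $u=k$, the window $\{1,\dots,k\}$ only abuts it. Neither claim is needed.

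\begin{itemize}
\item A $k$-window cannot contain both an index $\le u$ and an index $\ge u+k$, so it reduces only one side.
\item For two admissible offsets $1\le u<u'\le n-k$ you have $u'-u\le n-k-1\le k-1$, because $n\le 2k$. So the two zero blocks overlap or abut, their union is a single zero run, and the left and right sums attached to $u$ and $u'$ coincide.
\end{itemize}

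This is the only place $k\ge\lceil n/2\rceil$ enters, and there it is genuinely needed. In \PN{5}{2}, the move $(1,0,1,0,2)\to(1,0,1,0,0)$ goes between two members of the proposed set, whose blocks lie at distance $2>k-1$.

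In (ii), your ``discrete intermediate-value argument'' should be spelled out. It needs only $k\le n-1$, not $k\ge\lceil n/2\rceil$. Put $L_i=p_1+\dots+p_{i-1}$ and $R_i=p_{i+k}+\dots+p_n$, and let $i^*$ be the least $i\le n-k+1$ with $L_i\ge R_i$; this exists because $R_{n-k+1}=0$.

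\begin{itemize}
\item If $i^*=1$, then $R_1=0$, and emptying $\{1,\dots,k\}$ reaches $\bm 0$.
\item Otherwise $L_{i^*-1}<R_{i^*-1}$. Empty the block $\{i^*,\dots,i^*+k-2\}$, whose offset is $u=i^*-1\in[1,n-k]$, so $\min\{u,v\}\ge 1$ holds automatically.
\item If $L_{i^*}\le R_{i^*-1}$, trim $p_{i^*+k-1}$ by $R_{i^*-1}-L_{i^*}$. This is at most $p_{i^*+k-1}$ because $L_{i^*}\ge R_{i^*}$, and the move uses the window $\{i^*,\dots,i^*+k-1\}$.
\item Otherwise, trim $p_{i^*-1}$ by $L_{i^*}-R_{i^*-1}$. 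This is less than $p_{i^*-1}$ because $L_{i^*-1}<R_{i^*-1}$, and the move uses the window $\{i^*-1,\dots,i^*+k-2\}$.
\end{itemize}

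In every case the result lies in the proposed set. At least one token is removed, since $\bm p$ is not in the set. So the degenerate endpoint windows you were worried about never arise, and the boundary needs no appeal to $k\ge\lceil n/2\rceil$.
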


We will also use that the \P-positions of \CN{3}{2} are given by positions with equal stack heights and those of \CN{4}{2} are given by positions of the form $(a,b,a,b)$ (see~\cite{DuHe2013}, Theorems~2.1 and 2.2).

\subsection{Invariance Reduction}

Invariant vectors and the invariance reduction process will play a crucial role in solving \ruleset{NecklaceNim} games. They are the first step in reducing a game to a smaller known sub-game. 

\begin{definition} \label{def:inv} A vector is called a \emph{zero-one} vector if all its entries are either zeros or ones. A zero-one vector ${\bm z}$ is called \emph{invariant for a set $S$} of game positions if for all $\p \in S$ and any integer $c_{\z}$ for which $\p+c_{\z}\cdot \z \ge \bm{0}$, $\p\in S$ if and only if  $\p+c_{\z}\cdot \z  \in S$. 
\end{definition}

\begin{observation} \label{obs:invodd}
Let $\p$ be a position in set  $S_{\ell}$ of the game \NN{2\ell+1}{\ell+1}. The symmetric vector $\z_{\ell+1}=(1,0,\dots,0,1,0,\dots,0,1)$ is invariant for $S_{\ell}$ because subtracting a multiple $c_{\ell+1}$ of $\z_{\ell+1}$ from $\p$ reduces both $A$ and $B$ by $c_{\ell+1}$ via stacks $a_1$ and $b_{\ell}$, while reduction of the center stack does not affect the two sums, so (SE) remains valid. Since the center stack is a summand in each of the $s_i$, the minimum $s^*$ is reduced by $c_{\ell+1}$, matching the reduction in the minimum $m=\min(a_1,b_{\ell})$. 

\end{observation}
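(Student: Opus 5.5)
The plan is to verify Definition~\ref{def:inv} directly, by showing that both defining conditions of $S_{\ell}$ transform in the same way under $\p\mapsto\p'=\p+c\,\z_{\ell+1}$. Concretely, I would show that for every integer $c$ with $\p'\ge\bm 0$ the differences $A-B$ and $m-s^*$ are unchanged. This gives $\p\in S_\ell\iff\p'\in S_\ell$ at once, for arbitrary $\p$ and not only $\p\in S_\ell$. It handles both directions of the ``if and only if'' and both signs of $c$ simultaneously, so no separate argument for adding versus subtracting is needed.

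Write $n=2\ell+1$ and $k=\ell+1$, and index $\p=(p_1,\dots,p_{2\ell+1})$ with $a_i=p_i$, $c=p_{\ell+1}$ the center stack, and $b_i=p_{\ell+1+i}$. The vector $\z_{\ell+1}$ has ones exactly in positions $1$, $\ell+1$ and $2\ell+1$.

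\emph{Step 1 (SE).} $A$ sums positions $1,\dots,\ell$, which contain exactly one support index of $\z_{\ell+1}$, namely $1$. Similarly $B$ sums positions $\ell+2,\dots,2\ell+1$, which contain only the support index $2\ell+1$. Hence $A'=A+c$ and $B'=B+c$, so $A'-B'=A-B$.

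\emph{Step 2 (ME).} Since the move $\p\mapsto\p'$ changes $p_1$ and $p_n$ by the same $c$, we get $m'=\min(p_1+c,p_n+c)=m+c$. For the windows, $s_i$ has $k-1=\ell$ summands, covering positions $i,\dots,i+\ell-1$ for $i=2,\dots,\ell+1$. The key index check is the following:
\begin{itemize}
\item since $i\ge 2$, no window contains position $1$;
\item since $i+\ell-1\le 2\ell$, no window contains position $2\ell+1$;
\item since $i\le\ell+1\le i+\ell-1$ holds exactly when $2\le i\le \ell+1$, every window contains the center position $\ell+1$.
\end{itemize}
Thus each $s_i$ contains exactly one support index of $\z_{\ell+1}$, so $s_i'=s_i+c$ for all $i$, and therefore $(s^*)'=s^*+c$. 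Consequently $m'-(s^*)'=m-s^*$.

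Combining the two steps, (SE) and (ME) hold for $\p'$ if and only if they hold for $\p$, which is the claimed invariance. The only point that needs care, and the place I would expect an error to creep in, is the index bookkeeping in Step 2. One must confirm that the range $i=2,\dots,\ell+1$ is precisely the range for which every window of length $\ell$ straddles the center while avoiding both end stacks. One must also note that the minimum $s^*$ is shifted uniformly because all windows shift by the same amount $c$. Nonnegativity of $\p'$ plays no role beyond making $\p'$ a legal position.
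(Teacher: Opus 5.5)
Your proof is correct and takes essentially the same approach as the paper: you check that $A$, $B$, $m$ and every window sum $s_i$ (each containing the center stack but neither end stack) shift by the same multiple of $\z_{\ell+1}$, so (SE) and (ME) are preserved, and phrasing this as invariance of $A-B$ and $m-s^*$ simply makes both directions and both signs of the multiple explicit. One minor fix: you use $c$ both for the center stack and for the scalar multiple, so rename the multiple (e.g.\ to $c_{\ell+1}$, as the paper does).
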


This is not the only invariant vector for the anchor game \NN{2\ell+1}{\ell+1}, but it is the only one specific to this game and the one we will use for the reduction in Section~\ref{sec:proofs}. All vectors shown to be invariant for game \NN{2\ell}{\ell} in Observation~\ref{obs:inveven} are also invariant for \NN{2\ell+1}{\ell+1}.

\begin{observation} \label{obs:inveven}
Let $\p$ be a position in set  $S_{\ell}$ of the game \NN{2\ell}{\ell}. We claim that the  vectors  $\z_i$ with $i=2,\dots,\ell$ that have 1s  at locations $a_1$, $a_i$, $b_{i-1}$, and $b_{\ell}$ are invariant for $S_{\ell}$. Subtraction of a multiple $c_i$ of vector $\z_i$ from $\p \in S_\ell$ reduces both $a_1$ and $b_{\ell}$, as well as one additional stack each on the $A$-side and the $B$-side by $c_i$,  thereby maintaining (SE). A reduction in stack $a_i$ reduces all the $s_j$ with $2 \leq j \leq i$, while the reduction in $b_{i-1}$ (by the same amount) affects $s_j$ for $i+1 \leq j\leq \ell+1$. Thus, every $s_j$ and hence $s^*$ is reduced by that amount as well. This reduction matches the reduction in the end stacks $a_1$ and $b_{\ell}$, maintaining (ME), which implies that $\p \in S_{\ell}$ if and only if $\p-c_i \cdot \z_i \in S_{\ell}$. 
\end{observation}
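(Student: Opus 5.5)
We verify Definition~\ref{def:inv} directly. Fix $i\in\{2,\dots,\ell\}$, a position $\p$, and an integer $c$ with $\p+c\,\z_i\ge\bm{0}$; write $\p'=\p+c\,\z_i$. The plan is to show that each of the four quantities $A,B,m,s^*$ shifts by exactly $c$ in passing from $\p$ to $\p'$. Then $A'-B'=A-B$ and $m'-(s^*)'=m-s^*$, so (SE) and (ME) hold for $\p'$ if and only if they hold for $\p$. This gives $\p\in S_\ell \iff \p'\in S_\ell$ in both directions at once, for positive and negative $c$ alike. Invariance for $S_\ell$ only asks this for $\p\in S_\ell$, so this suffices.

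\textbf{Step 1 (the four modified stacks are distinct).} In the generic labeling for $n=2\ell$ we have $a_j=p_j$ and $b_j=p_{\ell+j}$. Hence $\z_i$ has ones at positions $1$, $i$, $\ell+i-1$ and $2\ell$. Since $2\le i\le\ell$, these satisfy $1<i\le\ell<\ell+i-1\le 2\ell-1<2\ell$, so they are four distinct positions. Two of them lie on the $A$-side and two on the $B$-side.

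\textbf{Step 2 (the sums and $m$).} By Step~1, $A'=A+c$ and $B'=B+c$. Both end stacks $a_1,b_\ell$ change by $c$, so $\min(a_1,b_\ell)$ also changes by $c$, giving $m'=m+c$.

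\textbf{Step 3 (the windows).} Here $k=\ell$, so each window $s_j=p_j+\dots+p_{j+\ell-2}$ has $\ell-1$ terms, with $j=2,\dots,\ell+1$. First, the window indices range over $2,\dots,2\ell-1$, so the end stacks $p_1,p_{2\ell}$ lie in no window. Next, $p_i$ lies in $s_j$ exactly when $i-\ell+2\le j\le i$; intersected with $2\le j\le\ell+1$, this is $2\le j\le i$. Likewise $p_{\ell+i-1}$ lies in $s_j$ exactly when $i+1\le j\le \ell+i-1$; intersected with the admissible range, this is $i+1\le j\le\ell+1$. These two index ranges are disjoint and together cover $\{2,\dots,\ell+1\}$. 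So every $s_j$ contains exactly one of the two modified interior stacks, hence $s_j'=s_j+c$ for all $j$, and therefore $(s^*)'=s^*+c$.

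The only step needing care is Step~3, namely checking that the window memberships partition the index range exactly, with no overlap and no gap. This reduces to the inequality bookkeeping above, which uses $2\le i\le \ell$ at the boundary cases $j=i$ and $j=i+1$.

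The same argument, with window length $\ell$ for \NN{2\ell+1}{\ell+1}, runs identically, because the center stack lies in every window and is untouched by $\z_i$. This justifies the remark that these vectors are invariant for the odd anchor game as well.
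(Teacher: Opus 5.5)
Your proposal is correct and follows essentially the same argument as the paper. Both sides of (SE) and the end-stack minimum $m$ shift by the same amount. The windows containing $a_i$ (namely $s_2,\dots,s_i$) and those containing $b_{i-1}$ (namely $s_{i+1},\dots,s_{\ell+1}$) partition the window indices, so every $s_j$, and hence $s^*$, shifts by that amount too. Your version makes the bookkeeping explicit: distinctness of the four 1-indices, the exact window ranges, and both signs of the multiple. Your remark on \NN{2\ell+1}{\ell+1} matches the paper's claim that these vectors remain invariant there.
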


By Definition~\ref{def:inv}, if $\z$ is invariant for $S_{\ell}$, then $\p$ and $\p-c_{\z}\cdot \z$ have the same outcome class. Since positions with zero stacks are typically easier to analyze, we select coefficients $c_{\z}$ such that $\p-c_{\z}\cdot \z$ has at least one zero.

\subsection{Zero and Merge Reduction}

There are two main ways to reduce games with a larger number of stacks to games with a smaller number of stacks,  the \emph{zero reduction} and the  \emph{merge reduction}. A zero reduction can be used when, during game play or as a result of invariance reduction, one or more stacks have been reduced to zero. These stacks no longer impact the game, so we remove them from the sets $A_i$.

\begin{definition} [Zero reduction] Let $G=$ \SN{n}{\A} be a \ruleset{SetNim}~game on an underlying graph with vertex set $V.$ Let $Z=\{i \mid p_i=0\}$. Then the zero reduction sub-game $\tilde{G}$ is played on the stacks $\tilde{V}=V-Z$, and $\tilde{A_i}={A_i} \cap \tilde{V}$.
\end{definition}

As there is a one-to-one correspondence between $\p\in G$ and $\tilde{\p}\in \tilde{G}$ because $\tilde{p}_i=p_i$ for $i \notin Z$ and $p_i=0$ for $i\in Z$, the  two games have isomorphic game trees and therefore, $\p$ and its reduction $\tilde{\p}$ have the same outcome class. \\

A merge reduction arises when there is a subset of vertices $C$ that has the property that if a move is allowed on one of those stacks, then one can also play on all of the other stacks of $C$. In this case, we can treat this collection of stacks as if it was a single stack for which the number of tokens is the sum of the tokens on the vertices of $C$.

\begin{definition} [Merge reduction] Let $G=$ \SN{n}{\A} be a \ruleset{SetNim}~game on an underlying graph with vertex set $V.$ Assume that a collection of stacks $C\subset V$ has the  property that for any set $A_i \in \A$, either $C\subseteq A_i$ or $C \cap A_i = \emptyset $. Then the merge reduction sub-game $\hat{G}$ is played on the vertices $\hat{V}=(V-C)\cup v^*$ (vertices of $C$ have been merged into a new vertex $v^*$) and the move sets  $\hat{A}_i$ are defined by
$$\hat{A_i}=\left\{\begin{tabular}{ll}
    $A_i$ & \text{if } $A_i\cap C=\emptyset$ \\
  $( A_i-C)\cup v^* $ & \text{otherwise }
\end{tabular}\right.$$
for $A_i \in \A$.
If $\p$ is a position of $G$, then the associated position $\hat{\p}$ of $\hat{G}$ is defined as $\hat{p}_{v^*}=\sum_{i\in C}p_i$, and $\hat{p}_i=p_i$ for  $i \notin C$.
 \end{definition}

Note that while there is a single reduced position $\hat{\p}$ for every $\p\in G$, there may be multiple positions $\p\in G$ that have the same reduced position. We will call the games $G$ and $\hat{G}$ \emph{congruent} as their game outcomes are the same (see Lemma 2.5 in~\cite{KaDuHe2025}), with the merge-reduced game playing the role of a canonical form. \\

The following example, which will play an important role in the proof of Theorem~\ref{thm:redgamen},  illustrates a zero reduction followed by a merge reduction. 

\begin{example}\label{ex:bi_red} Let $S$ be the subspace of positions in game \NN{2\ell}{\ell} that have a zero at location $b_i$ for some $2 \leq i < \ell-1$. Note that the move sets  $A_j=\{a_j,\ldots,b_{j-1}\}$ fall into three categories with regard to which of the stacks $a_i$, $a_{i+1}$, and $b_i$ belong to $A_j$: 

\begin{itemize}
    \item $b_i$ belongs to $A_j$, but neither $a_i$ nor $a_{i+1} $ do \quad $(j> i+1)$,
    \item $a_{i+1}$ and $b_i$ both belong to $A_j$, but $a_i$ does not \quad $(j=i+1)$,
    \item $a_i$ and $a_{i+1}$ both belong to $A_j$, but $b_i$ does not \quad $(j\le i)$.
    \end{itemize}

Eliminating stack $b_i$ reduces  the move sets $A_j$ with $j \geq i+1$ from $\ell$ to $\ell-1$ stacks.  The move set $\tilde{A}_{i+1}=\{a_{i+1},\dots,b_{i-1}\}$  can now be subsumed into the move set ${A}_{i}=\{a_{i},a_{i+1},\dots,b_{i-1}\}$. This eliminates the only move set that contained stack $a_{i+1}$ without also containing $a_i$. As a result, we can now combine stacks $a_i$ and $a_{i+1}$ in the remaining move sets $A_j$  with $j \leq i$, which reduces these move sets to $\ell-1$ playable stacks. Eliminating $b_i$ and merging $a_i$ and $a_{i+1}$ has reduced the total number of stacks by two, and the resulting move set $\A$ is that of  \NN{2\ell-2}{\ell-1}. That is, playing \NN{2\ell}{\ell} on $S$ is the same as playing \NN{2\ell-2}{\ell-1}. Figure~\ref{fig:redbi_red} illustrates the process for $\ell=5$ and $i=3$.

\begin{figure}[!htb]
\centering 
\begin{tikzpicture}
\node at (-4.75,3.75) {Move sets of NN(10,5):};

\node at (-7,2.8) {$b_5$};
\node at (-6.4,2.8) {$a_1$};

\node at (-6.4,2.4) {$a_1$};
\node at (-5.8,2.4) {$a_2$};
\node at (-5.2,2.4) {$a_3$};
\node at (-4.6,2.4) {$a_4$};
\node at (-4,2.4) {$a_5$};

\node at (-5.8,2) {$a_2$};
\node at (-5.2,2) {$a_3$};
\node at (-4.6,2) {$a_4$};
\node at (-4,2) {$a_5$};
\node at (-3.4,2) {$b_1$};

\node at (-5.2,1.6) {$a_3$};
\node at (-4.6,1.6) {$a_4$};
\node at (-4,1.6) {$a_5$};
\node at (-3.4,1.6) {$b_1$};
\node at (-2.8,1.6) {$b_2$};

\node at (-4.6,1.2) {$a_4$};
\node at (-4,1.2) {$a_5$};
\node at (-3.4,1.2) {$b_1$};
\node at (-2.8,1.2) {$b_2$};
\node at (-2.2,1.2) {$b_3$};

\node at (-4,0.8) {$a_5$};
\node at (-3.4,0.8) {$b_1$};
\node at (-2.8,0.8) {$b_2$};
\node at (-2.2,0.8) {$b_3$};
\node at (-1.6,0.8) {$b_4$};

\node at (-3.4,0.4) {$b_1$};
\node at (-2.8,0.4) {$b_2$};
\node at (-2.2,0.4) {$b_3$};
\node at (-1.6,0.4) {$b_4$};
\node at (-1,0.4) {$b_5$};

\draw[very thick,magenta] (-4.85,1.1) to (-5.95,1.1);
\draw[-{Stealth[length=1.75mm, width=1.75mm]}, very thick,magenta] (-5.95,1.1) to[bend left=60] (-5.5,1.6);

\draw[blue,thick] (-1.9,0.15) rectangle (-2.5,1.45);
\node at (-2.2,-0.25) {\color{blue}\textbf{1: delete $b_3$}};

\draw[green!70!black,thick] (-5.5,2.65) rectangle (-4.3,1.4);
\node at (-3.75,3.0) {\color{green!70!black}\textbf{3: merge: $a_3+a_4=\hat{a}$}};

\draw[magenta, very thick] (-4.9,1.4) rectangle (-2.6,0.95);
\node at (-5.75,0.75) {\color{magenta}\textbf{2: subsume}};

\node at (0,1.5) {$\implies$};

\begin{scope}[shift={(7.5,0)}]
\node at (-5.75,3.75) {Move sets of NN(8,4):};
\node at (-7,2.8) {$b_5$};
\node at (-6.4,2.8) {$a_1$};

\node at (-6.4,2.4) {$a_1$};
\node at (-5.8,2.4) {$a_2$};
\node at (-5.2,2.4) {$\hat{a}$};
\node at (-4.6,2.4) {$a_5$};

\node at (-5.8,2) {$a_2$};
\node at (-5.2,2) {$\hat{a}$};
\node at (-4.6,2) {$a_5$};
\node at (-4,2) {$b_1$};

\node at (-5.2,1.6) {$\hat{a}$};
\node at (-4.6,1.6) {$a_5$};
\node at (-4,1.6) {$b_1$};
\node at (-3.4,1.6) {$b_2$};

\node at (-4.6,0.8) {$a_5$};
\node at (-4,0.8) {$b_1$};
\node at (-3.4,0.8) {$b_2$};
\node at (-2.8,0.8) {$b_4$};

\node at (-4,0.4) {$b_1$};
\node at (-3.4,0.4) {$b_2$};
\node at (-2.8,0.4) {$b_4$};
\node at (-2.2,0.4) {$b_5$};
\end{scope}
\end{tikzpicture}
\caption{The game \NN{2\ell}{\ell} reduces to \NN{2\ell-2}{\ell-1} through zero and merge reduction, illustrated for $\ell=5$.}
\label{fig:redbi_red}
\end{figure}
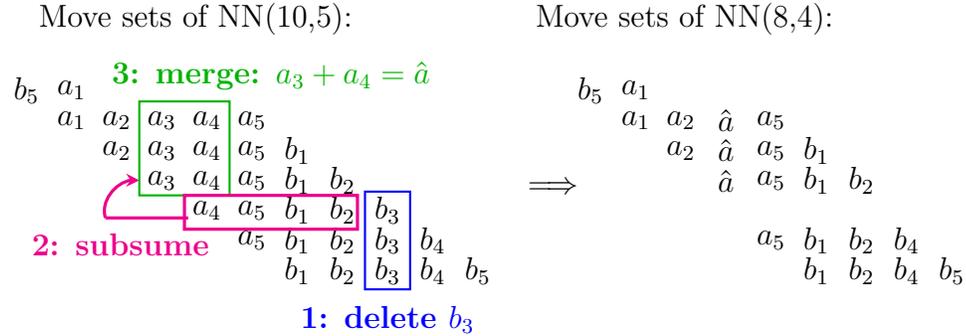
\end{example}

\subsection{\IRP~for Structure}

In~\cite{KaDuHe2025}, the \IRP~was introduced with the goal of finding a move from an \N-position to a \P-position by identifying subspaces that are equivalent to known games, and then using the winning move in the equivalent game to produce a winning move in the game of interest. Here we will modify the \IRP~to allow us to prove the structure of the \P-positions of a game by using the structure of the \P-positions of the smaller games. We adjust steps 3 and 4 of the \IRP~described in~\cite{KaDuHe2025}. 

\vspace{0.2in}

\noindent {\bf \IRP~for Structure}\\
Let $\mathcal{Z}=\{\z_1,\dots,\z_{\ell}\}$ be a set of invariant vectors of the proposed set $\P_G$ and let ${I}_{\z}=\{i \mid z_i=1\}$ be the set of indices at which the invariant vector $\z$ has ones. We refer to the elements of ${I}_{\z}$ as the \emph{$1$-indices of $\z$}; they represent the stacks affected by subtraction of a multiple of $\z$. The maximal multiple of $\z$ that can be subtracted from $\p$ is determined by the minimal stack height among the stacks affected by $\z$, defined as  the \emph{indicator minimum} 
$\im(\z;\p)=\min_{I_{\z}}(p_i)$.  Assume that $\p \in \P_G$.
\begin{enumerate}
    \item {\bf Invariance reduction}: Set $\p^{(0)}=\p$ and iteratively compute  $c_i=\im\left(\z_i;\p^{(i-1)}\right)$ and $\p^{(i)}=\p^{(i-1)}-c_i \z_i$ for $i=1,\ldots,\ell$. At each iteration, either $\p^{(i-1)}$ already has a zero at one of the 1-indices of $\z_i$ (in which case $c_i=0$) or a zero is created at a new location in $\p^{(i)}$. Applying this process, $\tilde{\p}:=\p^{(\ell)}$ has at least one zero. Note that the reduced position $\tilde{\p}$ is still a position in the original game $G$, and invariance implies that  $\tilde{\p}\in \P_G$. 
\item {\bf Subspace game reduction}: For each of the subspaces induced by the invariance reduction, we perform a zero reduction, subsume any move sets that are subsets of larger move sets,  and if possible, perform a merge-reduction to arrive at an isomorphic sub-game $\hat{G}$ with associated position $\hat{\p} \in \hat{G}$.  
\item {\bf Structure verification}: For each reduced sub-game $\hat{G}$, we use the known characterization of its \P-positions together with the defining conditions of the corresponding subspace to verify the structure of the \P-positions of the original game $G$.  
\end{enumerate}

We illustrate the process with two examples that will play a role in the proof of Theorem~\ref{thm:redgamen}. Assume that we apply the \IRP~to the game \NN{2\ell}{\ell} using  the invariant vectors $\z_2,\dots, \z_{\ell}$ with $I_{\z_i}=\{1,i,\ell+i-1,2\ell\}$ (see Observation~\ref{obs:inveven}). We let $\p^{(1)}=\p$ for ease of notation and define  $\Sigma_{min}=\sum_{i=2}^{\ell}\min(a_i,b_{i-1})$ to be the sum of the pair minima. Note that if $m=\min(a_1,b_{\ell})> \Sigma_{min}$, then the \IRP~has $c_i=\min(a_i,b_{i-1})$, resulting in $\tilde{\p}$ with 
\begin{equation}\label{eq:invvals}
  \tilde{a}_1=a_1-\Sigma_{min},\,\, \tilde{a}_i=a_i-c_i, \,\, \,\tilde{b}_{i-1}=b_{i-1}-c_{i},\, \text{ and } \,\tilde{b}_{\ell}=b_{\ell}-\Sigma_{min}   
  \end{equation}  
for $i=2,\dots,\ell$. Not that for each pair $a_i$ and $b_{i-1}$, one of the two stacks has become zero. 

\begin{example}\label{ex:a2-al_red}  We first consider the subspace of positions of the form $\tilde{\p}=(\tilde{a}_1,0,\dots,0,\allowbreak \tilde{b}_1,\allowbreak \dots,\tilde{b}_{\ell})$, with $\ell-1$ consecutive zeros.  
Applying first a zero reduction for stacks $a_2,\dots,a_{\ell}$ we are left with move sets $\{a_1,b_\ell\},\{a_1\}$  and $\{b_1,\dots,b_j\}$ with $1 \leq j\leq \ell$. These can be subsumed into move sets  $\{a_1,b_{\ell}\}$ and $\{b_1,\dots,b_{\ell}\}$, respectively. 
Finally we can merge  stacks $b_1,\dots,b_{\ell-1}$ into $\hat{b}$, giving move sets $\A=\{\{a_1,b_{\ell}\},\{\hat{b},b_{\ell}\}\}$, which are the move sets of \ruleset{PathNim} \PN{3}{2} with path $a_1$ \textemdash~$b_{\ell}$ \textemdash~$\hat{b}$.  Figure~\ref{fig:red_a2-al} illustrates the reduction for $\ell=5$.

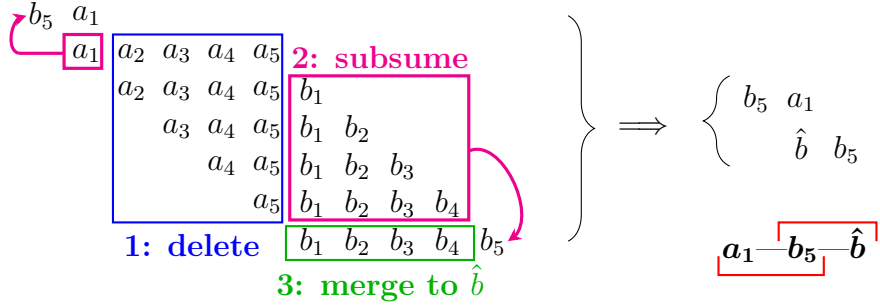
\begin{figure}[!htb]
        \centering 
\begin{tikzpicture}
\node at (-4.75,3.5) {Move sets of NN(10,5):};

\node at (-7,2.8) {$b_5$};
\node at (-6.4,2.8) {$a_1$};

\node at (-6.4,2.3) {$a_1$};
\node at (-5.8,2.3) {$a_2$};
\node at (-5.2,2.3) {$a_3$};
\node at (-4.6,2.3) {$a_4$};
\node at (-4,2.3) {$a_5$};

\node at (-5.8,1.8) {$a_2$};
\node at (-5.2,1.8) {$a_3$};
\node at (-4.6,1.8) {$a_4$};
\node at (-4,1.8) {$a_5$};
\node at (-3.4,1.8) {$b_1$};

\node at (-5.2,1.3) {$a_3$};
\node at (-4.6,1.3) {$a_4$};
\node at (-4,1.3) {$a_5$};
\node at (-3.4,1.3) {$b_1$};
\node at (-2.8,1.3) {$b_2$};

\node at (-4.6,0.8) {$a_4$};
\node at (-4,0.8) {$a_5$};
\node at (-3.4,0.8) {$b_1$};
\node at (-2.8,0.8) {$b_2$};
\node at (-2.2,0.8) {$b_3$};

\node at (-4,0.3) {$a_5$};
\node at (-3.4,0.3) {$b_1$};
\node at (-2.8,0.3) {$b_2$};
\node at (-2.2,0.3) {$b_3$};
\node at (-1.6,0.3) {$b_4$};

\node at (-3.4,-0.2) {$b_1$};
\node at (-2.8,-0.2) {$b_2$};
\node at (-2.2,-0.2) {$b_3$};
\node at (-1.6,-0.2) {$b_4$};
\node at (-1,-0.2) {$b_5$};

\draw[blue,thick] (-6.05,2.55) rectangle (-3.8,0.05);
\node at (-5,-0.25) {\color{blue}\textbf{1: delete}};

\draw[magenta, very thick] (-6.7,2.55) rectangle (-6.2,2.1);

\draw[magenta, very thick] (-3.7,0.0825) rectangle (-1.32,2);
\node at (-2.5,2.25) {\color{magenta}\textbf{2: subsume}};

\draw[-{Stealth[length=1.75mm, width=1.75mm]}, very thick,magenta] (-1.32,1) to[bend left=90] (-0.8,-0.2);
\draw[magenta, very thick] (-6.7,2.3) to (-7.3,2.3);
\draw[-{Stealth[length=1.75mm, width=1.75mm]}, very thick,magenta] (-7.3,2.3) to[bend left=90] (-7.2,2.8);

\draw[green!70!black,thick] (-3.75,0) rectangle (-1.25,-0.45);
\node at (-2.5,-0.75) {\color{green!70!black}\textbf{3: merge to $\hat{b}$}};

\usetikzlibrary{decorations.pathreplacing}

\draw[decorate,decoration={brace,amplitude=10pt}]
(0,2.8) -- (0,-0.2);
\node at (1,1.3) {$\implies$};
\draw[decorate,decoration={brace,amplitude=10pt}]
(2.15,0.8) -- (2.15,2);

\node at (3.1,3.5) {Move sets of PN(3,2):};
\node at (2.5,1.7) {$b_5$};
\node at (3.1,1.65) {$a_1$};

\node at (3.1,1.1) {$\hat{b}$};
\node at (3.7,1.025) {$b_5$};

\node at (3,-0.25) {$\boldsymbol{a_1 \text{—} b_5 \text{—} \hat{b}}$};

\draw[red,thick] (3.4,-0.4)--(3.4,-0.65)--(2,-0.65)--(2,-0.4);

\draw[red,thick] (2.8,-0.2)--(2.8,0.05)--(4.1,0.05)--(4.1,-0.2);

\end{tikzpicture}
\caption{Game \NN{2\ell}{\ell} reduces to \PN{3}{2} through zero and merge reduction on the subspace of positions with zeros at $a_2,\dots,a_{\ell}$, shown for $\ell=5$.}
\label{fig:red_a2-al}
\end{figure}

Now we perform the structure verification step. The \P-positions of \PN{3}{2} are of the form $(\hat{p}_1,0,\hat{p}_3)$ with $\hat{p}_1=\hat{p}_3$ by Theorem~\ref{thm:P-pos path}. Since we are considering the subspace where $\tilde{a}_i=0$ for $i=2,\dots,\ell$, we know that $a_i \leq b_{i-1}$, thus $\Sigma_{min}=\sum_{i=2}^{\ell}a_i$ and $c_i=a_i$. Using Equation~\eqref{eq:invvals} we obtain the following conditions:
\begin{equation} \label{eq:P32_1}
    \hat{p}_1=\tilde{p}_1=a_1-\Sigma_{min}=\hat{p}_3=\hat{b}=\sum_{i=1}^{\ell-1}\tilde{b}_i=\sum_{i=1}^{\ell-1}(b_i-a_{i+1})=\sum_{i=1}^{\ell-1}b_i-\Sigma_{min}
\end{equation}
and
\begin{equation} \label{eq:P32_2}
    \tilde{b}_{\ell}=b_{\ell}-\Sigma_{min}=0\quad \text{or equivalently, }\quad \Sigma_{min} =b_\ell.
\end{equation}
Equation~\eqref{eq:P32_1} implies that $a_1=\sum_{i=1}^{\ell-1}b_i$, which when added to~\eqref{eq:P32_2} gives
$$A=a_1+\Sigma_{min}=\sum_{i=1}^{\ell-1}b_i+b_{\ell}=B,$$
so condition (SE) of $S_{\ell}=\P_G$ is satisfied. Next we need to check the (ME) condition, namely that $m=\min(a_1,b_\ell)=\min(s_2,\dots,s_{\ell+1})=s^*$ where $s_j=p_j+\dots+p_{\ell+j-2}$. Note that for $j=2,\dots,\ell$ we have that $s_j =s_{j+1}-(b_{j-1}-a_j)$. In the sub-space under consideration we have $a_i \leq b_{i-1}$ which implies that $s_2\leq s_3 \leq \dots \leq s_{\ell+1}$, so $s^*=s_2=\Sigma_{min}=b_\ell$. Since $b_{\ell}-\Sigma_{min} = 0 \leq \hat{p}_1=a_1-\Sigma_{min}$, we have that $m=b_{\ell}=s^*$, so (ME) is satisfied. Therefore, $\hat{\p} \in \P_{\text{\PN{3}{2}}}$ if and only if $\p \in S_{\ell}$.
\end{example}

Next we look at a slightly different subspace, also with $\ell-1$ consecutive zeros, but this time at $a_3,\dots,b_1$. The methods are very similar, but the reduction leads to a different \ruleset{PathNim} game.

\begin{example}\label{ex:a3-b1}  
We now consider the subspace of positions of the form $\tilde{\p}=(\tilde{a}_1,\tilde{a}_2,\allowbreak 0,\allowbreak\dots,0,\tilde{b}_2,\dots,\tilde{b}_{\ell})$. 
Applying first a zero reduction on stacks $a_3,\dots,b_1$ we are left with move sets $\{a_1,b_\ell\}$, $\{a_1,a_2\} \supset \{a_2\}$ and $\{b_2,\dots,b_j\}$ with $2 \leq j\leq \ell$. The latter ones can be subsumed into the move set $\{b_2,\dots,b_{\ell}\}$.
Merging  stacks $b_2,\dots,b_{\ell-1}$ into $\hat{b}$ gives $\A=\{\{a_1,b_{\ell}\},\allowbreak \{a_1,a_2\}, \allowbreak\{\hat{b},b_{\ell}\}\}$, the move sets of \ruleset{PathNim} \PN{4}{2} with path $\hat{b}$ \textemdash~$b_{\ell}$ \textemdash~$a_1$\textemdash~$a_2$ and positions
\[ \hat{\p}=
(\hat{p}_1,\hat{p}_2,\hat{p}_3,\hat{p}_4)=\left(\,\sum_{i=2}^{\ell-1}(b_i-a_{i+1}),\,b_{\ell}-\Sigma_{min},\, a_1-\Sigma_{min},\,a_2-b_1\right).
\]
By Theorem~\ref{thm:P-pos path}, the \P-positions of \PN{4}{2}  either have $\hat{p}_2=0$ and $\hat{p}_1=\hat{p}_3+\hat{p}_4$ or $\hat{p}_3=0$ and $\hat{p}_1+\hat{p}_2=\hat{p}_4$. We will show the computation for the first case; the second one follows by symmetry, as the path is only determined up to reading direction.

Since we are considering the subspace where $\tilde{a}_i=0$ for $i=3,\dots,\ell$ and $\tilde{b}_1=0$, we know that $a_i \leq b_{i-1}$ for $i=3,\dots,\ell$ and $a_2>b_1$, thus $\Sigma_{min}=b_1+\sum_{i=3}^{\ell}a_i$. Using the form of $\hat{\p}$ gives
\begin{equation} \label{eq:p2=0}
    \hat{p}_2=0 \quad \Leftrightarrow \quad  b_{\ell}=\Sigma_{min}
\end{equation}
and
\begin{align*} \label{eq:p1=p3+p4}
    \hat{p}_1=\hat{p}_3+\hat{p}_4 \quad & \Leftrightarrow \quad \sum_{i=2}^{\ell-1}b_i-\sum_{i=3}^{\ell}a_i=(a_1-\Sigma_{min})+(a_2-b_1)  \nonumber \\
    &\underset{\eqref{eq:p2=0}}{\Leftrightarrow} \quad  \sum_{i=2}^{\ell-1}b_i+b_1+b_{\ell}=a_1+a_2+\sum_{i=3}^{\ell}a_i,
\end{align*}
thus condition (SE) of $S_{\ell}=\P_G$ is satisfied. For the (ME) condition, note that $a_i\leq b_{i-1}$ for $i\geq 3$ gives $s_3 \leq \dots \leq s_{\ell+1}$ as in Example~\ref{ex:a2-al_red}, and $a_2>b_1$ gives that $s_2>s_3$, so $s^*=s_3=\sum_{i=3}^{\ell}a_i+b_1=b_{\ell}$, where the last equality follows from Equation~\eqref{eq:p2=0}. Finally,  $\hat{p}_3 \geq 0$ implies that $a_1\geq \Sigma_{min}=b_{\ell}$, thus $m=\min(a_1,b_{\ell})=b_{\ell}=s^*$.
Therefore, $\hat{\p} \in \P_{\text{\PN{4}{2}}}$ if and only if $\p \in S_{\ell}$.
\end{example}

\section{Proofs of Main Results}\label{sec:proofs}

We now prove Theorem~\ref{thm:anchor}, the reduction to the anchor game,  Corollary~\ref{cor:k=n-1,2}, and Theorem~\ref{thm:redgamen}, the structure of the \P-positions of the anchor game \NN{2\ell+1}{\ell+1}~and the game \NN{2\ell}{\ell}.

\begin{proof}[Proof of Theorem~\ref{thm:anchor}] Let $k=n-\ell$. Then the $k$-element move sets of \NN{n}{k} are  $\{i,i+1,\dots,i+k-1\}$ for $i=1,\dots,\ell+1$. The sets for $i=1$ and $i=\ell+1$ have the vertices $\{\ell+1,\dots,n-\ell\}$ in common, and so do all the other $k$-element move sets. This set has at least two vertices as long as $n-\ell>\ell+1$, which is the case since $ \ell < \lfloor n/2 \rfloor$ by assumption. Thus we can merge the $n-2\ell$ vertices  $\{\ell+1,\dots,n-\ell\}$ into a single vertex, which reduces the total number of vertices to $n-(n-2\ell)+1=2\ell+1$ vertices. The end stack move is not affected by this merge, and each $k$-element move set now has $k-(n-2\ell)+1=\ell+1$ vertices, resulting in the merge-reduced game \NN{2\ell+1}{\ell+1}. \end{proof}

\begin{proof}[Proof of Corollary~\ref{cor:k=n-1,2}]
  We apply Theorem~\ref{thm:anchor}. For $\ell=1$ and $n \geq 4$, \NN{n}{n-1} reduces to \NN{3}{2} $\cong $ \CN{3}{2}. The \P-positions of \CN{3}{2} are those with equal stack heights, which gives the result.  For $\ell=2$ and $n \geq 6$, \NN{n}{n-2} reduces to \NN{5}{3} with positions $\hat{\p}=(p_1, p_2, \hat{p}_3,p_{n-1},p_n)$ where $\hat{p}_3=p_3+\dots+p_{n-2}$. The game \NN{5}{3} is the same as Example 3.3~\cite{ES1996}, which has \P-positions of the form  $\p =(a+b,c,a,b,a+c)$, so $\P_{\text{NN}(n,n-2)}=\{\p \mid p_1=\hat{p}_3+p_{n-1}, \, \,p_n=p_2+\hat{p}_3\}$. Substituting the value of $\hat{p}_3$ gives the result. Note that \NN{4}{2}~$\cong$ \CN{4}{2} and that the \P-positions of \CN{4}{2} fit the pattern, so the result is also true for $n = 4$. 
  \end{proof}

\begin{proof}[Proof of Theorem~\ref{thm:redgamen}]
We start by showing that any move from $\p \in S_\ell$ leads to an option $\p' \notin S_{\ell}$ by considering the possible legal moves. 
\begin{enumerate}
\item  \emph{Endpoint move} \(\{p_1,p_n\}\): If only one of the two end stacks is reduced, or they are reduced by unequal amounts, then (SE) no longer is valid. If they are reduced by the same amount, then the move results in   $m'<m$. However, all \(s_i\) remain unchanged, thus $(s^*)'=s^*=m>m'$  violating (ME) for $\p'$. 
\item  Move on $\{a_1,\dots,a_{\ell},(c)\}$ (\emph{A-side move}) or $\{(c), b_1,\dots,b_{\ell}\}$ (\emph{B-side move}). When $n$ is even, then these moves result in \(A'\neq B'\) because only one side of the sum equation is affected. The same is true for odd values of $n$, unless play is just on  the center stack $c$. Since the center stack $c$ is part of each of the $s_i$, all values $s_i$ get reduced, so $(s^*)'<s^*=m=m'$ and (ME) fails. 

\item Move on $\{p_j,\dots,p_{j+k-1}\}$ with $2\le j\le n-k$ (\emph{middle move}).
Let \(\Delta_A\) and \(\Delta_B\) be the total reductions in \(A\) and \(B\),
respectively.
\begin{itemize}
    \item If \(\Delta_A\neq\Delta_B\), then \(A'\neq B'\) and (SE) fails.
    \item If \(\Delta_A=\Delta_B>0\), then stacks in both halves were reduced. Let $s^*=s_t=p_t+\dots+p_{t+k-2}$. Since the minimum window spans $k-1$ stacks, necessarily one of the stacks in the minimum window must be played on, no matter where the minimum window is located. (Recall that for odd $n$, while there is the additional center stack, we also have that $k=\ell+1$, so the same argument applies.) Thus $(s^*)'\leq s_t'<s^*=m=m'$, so (ME) fails. 
\end{itemize}
\end{enumerate}
In all cases, at least one of (SE) or (ME) fails for $\p'$, so there is no move from $S_{\ell}$ to $S_{\ell}$. 

Next we need to show that we can always find a move from $\p\notin S_{\ell}$ to $\p' \in S_{\ell}$. To do so, we will use induction on $\ell$ to prove that $S_{\ell}$ is the set of \P-positions for the games \NN{2\ell}{\ell} and \NN{2\ell+1}{\ell+1}. We first prove the  case $n=3$ and then the base case for $\ell=2$, that is, for games \NN{4}{2} and \NN{5}{3}. In the induction step we use suitable invariant vectors to reduce the \NN{2\ell+1}{\ell+1} and \NN{2\ell}{\ell} games to known smaller games.

For $n=3$, \NN{3}{2} $\cong$ \CN{3}{2}, with \P-positions of the form $\{\p \mid p_1=p_2=p_3\}$~\cite{DuHe2013}. The sum condition is $p_1=p_3$, and the min condition is $s_2=p_2=\min(p_1,p_3)$, so both are satisfied and the \P-positions of \NN{3}{2} are given by $S_1$. Now we look at $\ell=2$. Game \NN{4}{2} is \CN{4}{2}, whose \P-positions are given by $\{\p \mid p_1=p_3, \,\,p_2=p_4\}$~\cite{DuHe2013}. Clearly, (SE) is satisfied. We also have that $s_i=p_i$ for $i=2,3$, so
$s^*=\min(p_2,p_3)=\min(p_4,p_1)=m$. Thus, the \P-positions of \NN{4}{2} are given by $S_2$.  The game \NN{2\ell+1}{\ell+1} for $\ell=2$ is \NN{5}{3}. This game was solved in Example 3.3~\cite{ES1996}, with \P-positions given by $\{\p \mid \p =(a+b,c,a,b,a+c)\}$. Clearly, the sum condition is satisfied as $p_1+p_2=p_4+p_5$. Since $k=3$, the min condition is given by 
\[m=\min( p_1,p_5)=\min(a+b,a+c)=\min(p_3+p_4, \, p_2+p_3)=\min(s_3,s_2)=s^*,
\]so (ME) is satisfied. 
Therefore, the base case of the induction hypothesis is proved.  Now assume that the \P-positions of games \NN{2j}{j} and \NN{2j+1}{j+1} are given by $S_{j}$ for $j<\ell$.  

We first consider the game \NN{2\ell+1}{\ell+1}. Let $\p=(a_1,\dots,a_{\ell},c,b_1,\dots,b_{\ell}) \in S_{\ell}$. By Observation~\ref{obs:invodd}, the symmetric vector $\z_{\ell+1}=(1,0,\dots,0,1,0,\dots,0,1)$ is invariant. The  positions  $\tilde{\p}=\p-c_{\ell+1}\cdot \z_{\ell+1}$ with $c_{\ell+1}=\min(a_1,c,b_{\ell})$ fall into one of two sub-spaces: 
\begin{itemize}
    \item $m=\min(a_1,b_{\ell}) \leq c$: Then $\tilde{\p}$ has a zero at one of the end stacks, and without loss of generality, we may assume that $m=a_1$ in what follows. Zero reduction results in the \ruleset{PathNim} game \PN{2\ell}{\ell+1}. By Theorem~\ref{thm:P-pos path}, the \P-positions of this game are given by positions $$\hat{\p}=(\underbrace{p_2,\dots,p_{\ell+1}}_{\ell \text{ stacks}}, \underbrace{p_{\ell+2},\dots,p_{2\ell+1}}_{\ell\text{ stacks}})$$ that have a string of at least $\ell$ zeros, with equal sums on both sides of the zeros. This string of zeros  corresponds to one of the $s_i$ in game \NN{2 \ell +1}{\ell+1} for $i=3,\dots,\ell+1$, hence $s^*(\tilde{\p})=0=\tilde{a_1}$, so (ME) holds. The equality of the two sums on either side of the zero string satisfies the sum condition of $S_{\ell}$. Thus $\hat{\p}\in \P_{\PN{2\ell}{\ell+1}}$ if and only if $\p \in S_{\ell}$.
    \item $m > c$: Then $\tilde{\p}$ has a zero at the center stack, and zero reduction results in the game \NN{2\ell}{\ell} with position $\hat{\p}=(a_1,\dots,a_{\ell},b_1,\dots,b_{\ell})$. 
   \end{itemize} 
We now consider game \NN{2\ell}{\ell}. By Observation~\ref{obs:inveven}, the vectors $\z_i$ with index set $\{1,i,\ell+i-1,n\}$ are invariant for $S_\ell$, and we perform an invariance reduction using all of them. The resulting position $\tilde{\p}$ falls into one of the sub-spaces indicated in the table below.    
\begin{table}[!ht]
\centering
\renewcommand{\arraystretch}{1.3} 
\begin{tabular}{|c|ll|c|}
\hline
\rowcolor[HTML]{DAE8FC} 
\multicolumn{1}{|l|}{\cellcolor[HTML]{DAE8FC}\textbf{Case}} & \multicolumn{2}{l|}{\cellcolor[HTML]{DAE8FC}\textbf{Condition(s)}}                                 & \textbf{Reduced Game}                                                 \\ \hline\hline
\textbf{1}                                                  & \multicolumn{2}{l|}{$\tilde{a}_1=0$ or $\tilde{b}_{\ell}=0$}                                    & \PN{2\ell-1}{\ell}   \\ \hline
\textbf{2}                                                  & \multicolumn{2}{l|}{$\tilde{b}_{i}=0$ for some $i\in\{2,\dots,\ell-1\}$}                        & \NN{2\ell-1}{\ell-1}  \\ \hline
{\textbf{3}} &  \multicolumn{1}{l|}{$\tilde{a}_3=\dots= \tilde{a}_{\ell}=0$}&\multicolumn{1}{c|}{$\tilde{a}_2=0$}                     & \PN{3}{2}   \\ \cline{3-4} 
  & \multicolumn{1}{l|}{} & \multicolumn{1}{c|}{$\tilde{b}_1=0$}  & \PN{4}{2}                                          \\ \hline
\end{tabular}
\end{table}

Case 1 occurs when $m=\min(a_1,b_{\ell})\leq \Sigma_{min}=\sum_{i=2}^{\ell}\min(a_i, b_{i-1})$ and the proof follows as in the case when $n=2\ell+1$.  The reduction to game \NN{2\ell-1}{\ell-1}~in Case 2 follows from Example~\ref{ex:bi_red}. What is left to show is that $S_\ell$ is the set of \P-positions. In this case, $\tilde{\p}$ has values given in Equation~\eqref{eq:invvals}. Figure~\ref{fig:bi_red} shows the effects of the zero reduction of $b_i$ and the merge reduction that combines $a_i$ and $a_{i+1}$. Note that we can consider the zero reduction as a merge of stacks $b_{i-1}$ and $b_i$, which aids in understanding how $\hat{\p}$ and $\tilde{\p}$ relate to each other. 

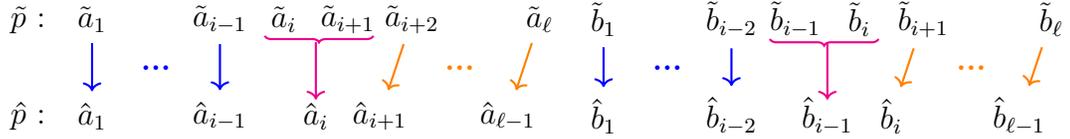
\begin{figure}[!htb]
\begin{center}
\begin{tikzpicture}[scale=0.85]
\node at (0,1.5)   {$\tilde{p}$ : };
\node at (0,0)   {$\hat{p}$ : };
\node (1) at (1,1.5)   {$\tilde{a}_1$};
\node (2) at (3,1.5)   {$\tilde{a}_{i-1}$};
\node (3) at (4,1.5) {$\tilde{a}_i$};
\node (4) at (5,1.5) {$\tilde{a}_{i+1}$};
\node (5) at (6,1.5) {$\tilde{a}_{i+2}$};
\node (6) at (8,1.5)   {$\tilde{a}_{\ell}$};

\node (7) at (9,1.5)   {$\tilde{b}_1$};
\node (8) at (11,1.5)  {$\tilde{b}_{i-2}$};
\node (9) at (12,1.5){$\tilde{b}_{i-1}$};
\node (10) at (13,1.5){$\tilde{b}_{i}$};
\node (11) at (14,1.5){$\tilde{b}_{i+1}$};
\node (12) at (16,1.5)  {$\tilde{b}_{\ell}$};

\draw[
  decorate,
  decoration={brace, mirror},
  magenta,
  thick
] (3.7,1.25) -- (5.4,1.25);

\draw[->,magenta,thick] (4.5,1.15) -- (4.5, 0.25); 

\node (21) at (1,0)   {$\hat{a}_1$};
\node (22) at (3,0)   {$\hat{a}_{i-1}$};
\node (23) at (4.5,0) {$\hat{a}_i$};
\node (24) at (5.5,0) {$\hat{a}_{i+1}$};
\node (25) at (7.5,0)   {$\hat{a}_{\ell-1}$};

\node (26) at (9,0)   {$\hat{b}_1$};
\node (27) at (11,0)  {$\hat{b}_{i-2}$};
\node (28) at (12.5,0){$\hat{b}_{i-1}$};
\node (29) at (13.5,0){$\hat{b}_{i}$};
\node (30) at (15.5,0)  {$\hat{b}_{\ell-1}$};

\draw[
  decorate,
  decoration={brace, mirror},
  magenta,
  thick
] (11.6,1.20) -- (13.3,1.20);

\draw[->,magenta,thick] (12.5,1.15) -- (12.5, 0.25); 

\draw[->,blue,thick] (1) -- (21);        
\draw[->,blue,thick] (2) -- (22); 
\draw[->,blue,thick] (7) -- (26); 
\draw[->,blue,thick] (8) -- (27); 

\draw[->,orange,thick] (5) -- (24); 
\draw[->,orange,thick] (6) -- (25);
\draw[->,orange,thick] (11) -- (29);
\draw[->,orange,thick] (12) -- (30);

\node at (2,0.75)   {{\color{blue}\bf ...}};
\node at (10,0.75)   {{\color{blue}\bf ...}};

\node at (6.75,0.75)   {{\color{orange}\bf ...}};
\node at (14.75,0.75)   {{\color{orange}\bf ...}};
\end{tikzpicture}
\vspace{-17pt}
\end{center}
    \caption{Mapping of $\tilde{\p}$ to $\hat{\p}$ when $\tilde{b}_i=0$. }
    \label{fig:bi_red}
\end{figure}
Specifically, 
\[
\hat{a}_j=\left\{
    \begin{array}{lll}
       \tilde{a}_j  & \text{for}  &  2\leq j<i\\
       \tilde{a}_i +  \tilde{a}_{i+1} & \text{for}  &  j = i\\
       \tilde{a}_{j+1}  & \text{for}  &  i< j \leq \ell-1\\
    \end{array}\right. \quad 
    \hat{b}_j=\left\{
    \begin{array}{lll}
       \tilde{b}_j  & \text{for}  &  1\leq j<i-1\\
       \tilde{b}_{i-1}+   \tilde{b}_{i} & \text{for}  &  j = i-1\\
       \tilde{b}_{j+1}  & \text{for} &  i\leq  j \leq \ell-2.\\
    \end{array}\right.
\]

Now we consider the (ME) equation. Let $\hat{s}_j=\hat{a}_j+\dots+\hat{b}_{j-2}$ for $j=2,\dots,\ell$. When $j \leq i$, then $\hat{s}_j$ contains $\hat{a}_i$ but not $\hat{b}_{i-1}$, and for $j>i$, $\hat{s}_j$ contains $\hat{b}_{i-1}$ but not  $\hat{a}_i$. Thus 
\[
\hat{s}_j=\left\{
    \begin{array}{lll}
       \tilde{a}_j+\dots +\tilde{a}_{\ell}+ \tilde{b}_1+\dots +\tilde{b}_{j-2}=\tilde{s}_j  & \text{for}  &  2\leq j\leq i\\
       \tilde{a}_{j+1}+\dots +\tilde{a}_{\ell}+ \tilde{b}_1+\dots +\tilde{b}_{j-1}=\tilde{s}_{j+1}  & \text{for}  &  i < j\leq \ell.\\
    \end{array}\right. 
\]
Since by assumption $\tilde{b_i}=0$, we have $a_{i+1} \ge b_i$, which implies that $\tilde{s}_{i+1}=\tilde{s}_{i+2}+\tilde{a}_{i+1}-\tilde{b}_{i}\geq \tilde{s}_{i+2}$.   Therefore,
\[
s^*(\tilde{\p})
=\min_{2\le j\le \ell+1} (\tilde{s}_j)
= \min_{\substack{2\le j\le \ell+1\\ j\neq i+1}} (\tilde{s}_j)
=\min_{2 \leq j \leq \ell} (\hat{s}_j)=s^*(\hat{\p}).
\]
Since $\hat{\p} \in S_{\ell-1}$ by induction hypothesis, we have that 
\[
s^*(\hat{\p})=\min(\hat{a}_1,\hat{b}_{\ell-1})=\min(\tilde{a}_1,\tilde{b}_{\ell}).
\]
Thus (ME) holds for $\tilde{\p}$ and because of invariance, also for $\p$. Since the only changes in the zero and merge reductions was to merge two stacks in each half, the sum condition is not affected, so (SE) holds for $\tilde{\p}$ and due to invariance, for $\p$. 

Finally, Case 3 results when none of the $\tilde{b}_i$ of Case 2 are zero, forcing $\tilde{a}_3=\dots \tilde{a}_{\ell}=0$, and then distinguishing whether $a_2 \leq b_1$ and $a_2 >b_1$. The reductions and structure of the \P-positions follow from Example~\ref{ex:a2-al_red} and~Example~\ref{ex:a3-b1}. This completes the proof. 
\end{proof}

\section{How to play \NN{n}{k} with $k=\lceil n/2 \rceil$}
In Section~\ref{sec:proofs}, we proved the pattern for the \P-positions of games \NN{n}{k} with $k=\lceil n/2 \rceil$, but the proof provided little guidance on how to actually find a move from an \N-position to a \P-position. We will provide a ``cheat sheet" to indicate how to move in the various cases when one or both of the (SE) and (ME) conditions are not satisfied. In some cases a simple endpoint move is easily apparent. In instances when an $A$-side move or a middle move are necessary, we will provide algorithms that indicate how to produce the desired move. 

\subsection{Algorithms}

The algorithms presented here are named according to the imbalance they resolve. Before we present the different algorithms, we collect a few facts that will assist us in proving that the algorithms terminate.

\begin{remark} \label{rem:algs}
    Let $A>B$, $\delta=|s^*-m|>0$, $0<m=\min(a_1,b_{\ell})$, and $t$ be the index of the minimum window.
    \begin{enumerate}     
        \item \label{itm:impactind} Play on stack $a_j$ for $j \geq 2$ impacts the values of $s_i$ for $2 \leq i\leq j$ and does not impact the values of $s_i$ for $i>j$. Play on stack $b_y$ affects sums $s_i$ for  $i=y+2,\dots, \ell+1$. The impacted sums are decreased by the same amount as $a_j$ or $b_y$ are. 
        \item \label{itm:maxdedind} When adjusting an individual stack $a_j$ or $b_y$, one cannot reduce the stack by more than the minimal distance of the impacted $s_i$ to the goal $g=\min(m,s^*)$. That is, by at most  
        \begin{itemize}
            \item $m_j=\min_{2 \leq i \leq j}(s_i - g)$ for play on stack $a_j$
            \item $m_y=\min_{y+2 \leq i \leq \ell+1}(s_i - g)$ for play on stack $b_y$. 
        \end{itemize} 
        Reducing stack $a_j$ by $d$ decreases all values $m_i$ for $i \leq j$ by $d$ as well. Likewise,  values $m_i$ for $y+2\leq i \leq \ell+1$ are reduced by $d$ if $b_y$ is reduced by $d$.
        \item \label{itm:m_l+1} When there is overlap between affected sums $s_i$ in simultaneous play on both sides to reduce $\delta=s^*-m$ to zero, then the requirement that no more than $\lfloor{\delta/2}\rfloor$ tokens be removed prevents $\delta$ from falling below zero. Note that by definition, $m_{\ell+1}=\min_{2\leq i \leq \ell+1}(s_i-m)=s^*-m=\delta$. 
        \item \label{itm:consec0} If $\delta=s^*-m>0$, then $m>0$ implies that $s^* >0$. This in turn implies that any sequence of $\ell-1$ consecutive stacks among stacks $a_2$ to $b_{\ell-1}$ cannot consist of only zeros; there is at least one non-zero stack. 
    \end{enumerate}
\end{remark}

The first algorithm deals with positions where both the (SE) and the (ME) conditions are violated, while  the other two algorithms apply when exactly one of the two conditions is not satisfied.

\begin{minipage}{.9\linewidth}
\vspace{-15pt}
\begin{algorithm}[H]
\caption{\Dd-Algorithm}\label{alg:Delta_delta}
\begin{algorithmic}[1]
\Require $\p=(a_1,\dots,a_\ell,b_1,\dots,b_\ell)$ with $A > B$ and $s^*>m$
\State $\r \gets \p$
\State $\Delta \gets A-B$
\State $m \gets \min(a_1,b_{\ell})$
\State Compute the quantities $s_i$ for $i=2,\dots,\ell+1$ 
\State Compute the quantities $m_j=\min_{2 \leq i \leq j}(s_i-m)$ for $j=2,\dots,\ell+1$
\State $j \gets \ell$
\State $\delta\gets m_{\ell+1}$
\For{$j=\ell,\ell-1,\dots,2$}
   \State $d \gets \min(a_j(\p),\Delta, m_j)$
        \If{$d>0$}
            \State $a_j(\r)\gets a_j(\p)-d$
            \State $\Delta\gets \Delta-d$
            \State $m_i\gets m_i-d$ for $2 \leq i \leq j-1$
            \State $\delta \gets \min(\delta,m_j)$     
        \EndIf  
             \If{$\Delta=0$ \textbf{or} $\delta=0$}
                  \State \Return $\r$
            \EndIf         
\EndFor
\end{algorithmic}
\end{algorithm}
\end{minipage}

\vspace{0.005cm}

\begin{lemma} \label{lem:Dd} Let $n=2\cdot \ell \ge 4$, $\p=(a_1,\dots,a_{\ell},\, b_1,\dots,b_{{\ell}})\notin S_{\ell}$ with $A-B=\Delta > 0$ and $m=\min(a_1,b_{\ell})<s^*$.  Then the \Dd-Algorithm returns a position $\r$  with either $\delta=s^*-m=0$ or $\Delta=0$. Specifically, the resulting position $\r$ is of the form $(a_1,\dots,a_{j-1},r_j,0,\dots,0,b_{1},\dots,b_{\ell})$ with $j \geq 2$, that is, play has occurred on exactly $\ell-j+1$ $A$-side stacks. 
\end{lemma}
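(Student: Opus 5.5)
The plan is to prove three things by induction on the loop index, running $j=\ell,\ell-1,\dots$: an invariant holds after each pass, the loop cannot run to completion without triggering the return condition, and the stopping configuration has the stated shape. Throughout, $a_1$ and every $b_y$ are untouched, so $B$ and $m=\min(a_1,b_\ell)$ are constant. Only $A$ and the window sums $s_i$ change, by Remark~\ref{rem:algs}(\ref{itm:impactind}).

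\emph{Invariant.} After the pass with index $j$, I claim four things hold. First, $\Delta=A(\r)-B\ge 0$. Second, for every $i$ the stored value $m_i$ equals $\min_{2\le i'\le i}(s_{i'}(\r)-m)$ and is $\ge 0$, reading $m_j$ as updated to $m_j-d$ after play on $a_j$. Third, $\delta=m_{\ell+1}(\r)=s^*(\r)-m\ge0$. Fourth, $\r$ agrees with $\p$ except that $a_{j+1},\dots,a_\ell$ are $0$ and $a_j$ is replaced by $r_j=a_j-d$.

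The invariant follows from the choice $d=\min(a_j,\Delta,m_j)$. Reducing $a_j$ by $d$ lowers exactly $s_2,\dots,s_j$ by $d$ (Remark~\ref{rem:algs}(\ref{itm:impactind})). Hence $m_i$ drops by $d$ for $i\le j$, and for $i>j$ the new $m_i$ is $\min(m_i^{\text{old}},m_j-d)$. This is exactly what the updates $m_i\gets m_i-d$ and $\delta\gets\min(\delta,m_j)$ record (Remark~\ref{rem:algs}(\ref{itm:maxdedind})). Since $d\le m_j$ and $d\le\Delta$, no quantity becomes negative.

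\emph{Shape.} If the algorithm does not return at step $j$, then $\Delta>0$ and $\delta>0$. The second gives $m_j>0$ after the update, so $d<m_j^{\text{old}}$, and the first gives $d<\Delta^{\text{old}}$. Therefore $d=a_j$, i.e.\ the stack $a_j$ was emptied. This yields the form $(a_1,\dots,a_{j-1},r_j,0,\dots,0,b_1,\dots,b_\ell)$, where $j$ is the index at which the algorithm returns.

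\emph{Termination with $\Delta=0$ or $\delta=0$.} This is the main point. Suppose the loop reaches $j=2$ without having returned. Then $a_3,\dots,a_\ell$ are all zero, and by the invariant $m_2=s_2(\r)-m=a_2-m$ with $m_2>0$. At $j=2$ we have $d=\min(a_2,\Delta,m_2)$. If $d=\Delta$ or $d=m_2$, then $\Delta=0$ or $\delta\le m_2=0$ after the update, and we return. The remaining possibility is $d=a_2<m_2=a_2-m$, which would force $m<0$, a contradiction. So at $j=2$ the cap $\Delta$ or $m_2$ binds, and the algorithm returns at some $j\ge2$, with play on exactly $\ell-j+1$ stacks of the $A$-side.

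When $m=0$ the same argument applies, because then $m_2=a_2$ and the cap $m_2$ binds simultaneously. The hypothesis $s^*>m$ is used only to guarantee that initially $m_{\ell+1}=\delta>0$, so the algorithm does not stop vacuously. The main obstacle I expect is bookkeeping: the pseudocode updates $m_i$ only for $i\le j-1$, so I must state carefully that $\delta$ equals the true $s^*(\r)-m$ after the update, including the window sums $s_i$ with $i>j$ that are unaffected by the move.
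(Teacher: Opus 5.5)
Your proof is correct and follows the paper's argument. It uses the same three-way split on which term attains $d=\min(a_j,\Delta,m_j)$, the same observation that every non-returning pass empties $a_j$, and the same contradiction at $j=2$, where $s_2=a_2$ forces the $m_2$-cap to bind; the paper phrases this as $s_2=0<m$ violating $s_i\ge m$, which tacitly needs $m>0$. Your explicit invariant, your separate handling of $m=0$, and your remark that $m_j$ itself must be decremented (as the paper's example does, though its pseudocode omits it) make the argument more careful than the paper's.

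One small correction: the algorithm never updates the stored $m_i$ for $j<i\le\ell$. So your invariant should be claimed only for $i\le j$ and for $\delta=m_{\ell+1}$, which are the only values read afterwards.
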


\begin{proof} The definition of $d$ ensures the move on $a_j$ is legal, that not more than $\Delta$ tokens are removed, and that none of the $s_i$ fall below $m$ by definition of the $m_j$ (see Remark~\ref{rem:algs}.\ref{itm:maxdedind}). Whenever $d>0$, the value of $\Delta$ is reduced, and $\delta$ decreases when the minimum sum $s^*$ is affected by the move.  When the loop has index $j$, one of the following happens:
\begin{itemize}
    \item $d=\Delta$: the algorithm stops with $\Delta(\r)=0$ and $a_j(\r) \geq 0$.
    \item $d=m_j$: the algorithm stops with $\delta(\r)=0$ and $a_j(\r) \geq 0$.
    \item $d=a_j$: then $a_j(\r)=0$.
\end{itemize} 
Thus, if the algorithm were to stop at $j=2$ and neither $\delta=0$ nor $\Delta=0$, then we have $a_2(\r)=\dots = a_{\ell}(\r)=0$, which implies that $s_2=s^*=0<m$, which cannot happen as the definition of $d$ ensures that $s_i \geq m$ in each step.   
\end{proof}

We now illustrate the algorithm with two examples that showcase the two different outcomes of the algorithm. 

\begin{example} \label{ex:DdD1} Let $\p=(4,21,3,2,3,4,2,7,6,5)$ with $A=33$, $B=24$, $\Delta=9$, $m=4$ and $(s_2,s_3,s_4,s_5,s_6)=(29,12, 11, 16, 19)$, which implies $\delta=7$ and $(m_2,m_3,m_4,m_5,m_6)=(25,8, 7, 7, 7)$. The table below shows the initial values in the first row. The first loop starts with $j=5$ and $d=\min(a_5(\p),\Delta,m_5)=\min(3,9,7)=3$, resulting in $a_5(\r)=0,\Delta=6$, $\delta=\min(\delta,m_5)=\min(7,4)=4$, and $\r= (4,21,3,2,0,4,2,7,6,5)$. These  updated values are shown in the row for $j=5$. The table shows the corresponding values for $j=4$ and $j=3$ after which the algorithm ends with $\delta=0$, $\r=(4,21,0,0,0,4,2,7,6,5)$, and $\Delta=1$.

\begin{table}[!htb]
\centering
\begin{tabular}{|l||l||l|l|l|l|l||l||c|}
\hline
\rowcolor[HTML]{DAE8FC} 
$j$ & $\Delta$ & $m_2$ & $m_3$ & $m_4$ & $m_5$ & $m_6$ & $\delta$ & \cellcolor[HTML]{DAE8FC}$\r$ \\ \hline \hline
    & ${\bf 9}$      & $25$  & $8$   & $7$   & $7$   & $7$   & ${\bf 7}$      & $(4,21,3,2,3,4,2,7,6,5)$     \\ \hline
$5$ & ${\bf 6}$      & $22$  & $5$   & $4$   & $4$   &       & ${\bf 4}$      & $(4,21,3,2,{\bf 0},4,2,7,6,5)$     \\ \hline
$4$ & ${\bf 4}$      & $20$  & $3$   & $2$   &       &       & ${\bf 2}$      & $(4,21,3,{\bf 0},0,4,2,7,6,5)$     \\ \hline
$3$ & ${\bf 1}$      & $17$  & $0$   &       &       &       & ${\bf 0}$      & $(4,21,{\bf 0},0,0,4,2,7,6,5)$     \\ \hline
\end{tabular}
\end{table}
\end{example}

\begin{example} \label{ex:Ddd1} Let $\p=(2,15,8,4,5,4,5,5,5,8)$ with $A=34$, $B=27$, $\Delta=7$, $m=2$ and $(s_2,s_3,s_4,s_5,s_6)=(32,21,18,19,19)$,  which implies $\delta=16$ and $(m_2,m_3,m_4,m_5,m_6)=(30,19, 16, 16, 16)$. The table below shows the computations of the \Dd-Algorithm for $j=5$ and $j=4$, ending with $\Delta=0$, $\r=(2,15,8,2,\allowbreak 0, \allowbreak 4, 5, 5, 5,8)$, and $\delta=9$.

\begin{table}[!htb]
\centering
\begin{tabular}{|l||l||l|l|l|l|l||l||c|}
\hline
\rowcolor[HTML]{DAE8FC} 
$j$ & $\Delta$ & $m_2$ & $m_3$ & $m_4$ & $m_5$ & $m_6$ & $\delta$ & \cellcolor[HTML]{DAE8FC}$\r$   \\ \hline
    & ${\bf 7}$      & $30$  & $19$  & $16$  & $16$  & $16$  & ${\bf 16}$     & $(2,15,8,4,5,4,5,5,5,8)$       \\ \hline
$5$ & ${\bf 2}$      & $25$  & $14$  & $11$  & $11$  &       & ${\bf 11}$     & $(2,15,8,4,{\bf 0},4,5,5,5,8)$ \\ \hline
$4$ & ${\bf 0}$      & $23$  & $12$  & $9$   &       &       & ${\bf 9}$      & $(2,15,8,{\bf 2},0,4,5,5,5,8)$ \\ \hline
\end{tabular}
\end{table}
\end{example}
\vspace{-10pt}
We will return to these two examples once we have described the $\delta$- and the $\Delta$-Algorithms.  We start with the $\Delta$-Algorithm.

\begin{minipage}{.9\linewidth}
\begin{algorithm}[H]
\caption{$\Delta$-Algorithm}
\label{alg:Delta}
\begin{algorithmic}[1]
\Require $\p=(a_1,\dots,a_\ell,b_1,\dots,b_\ell)$ with $A>B$  and $m=s^*$
\State $\r \gets \p$;  $\phantom{xx}\Delta \gets A-B$
\State Compute the quantities $s_i$ for $i=2,\dots,\ell+1$ and $s^*=\min(s_i)$
\State $t \gets $ smallest index satisfying $s_t=s^*$
\State Compute the quantities $m_j=\min_{2 \leq i \leq j}(s_i-s^*)$ for $j=2,\dots,t-1$
\State $j \gets t-1$
\While{$\Delta > 0$ \textbf{and} $j \geq 2$}
    \State $d \gets \min(a_j(\p),\Delta,m_j)$
    \If{$d > 0$}
        \State $a_j(\r) \gets a_j(\p)-d$
            \State $\Delta \gets \Delta - d$
        \State $m_i \gets m_i-d$ for $i=2,\dots,j-1$
    \EndIf
    \State $j \gets j - 1$
\EndWhile
\If{$\Delta=0$}
\State \Return $\r$
\ElsIf{$j=2$}
\State $a_1(\r) \gets s_{\ell+1}$
\State \Return $\r$
\EndIf

\end{algorithmic}
\end{algorithm}
\end{minipage}
\vspace{0.5cm}

\begin{lemma} \label{lem:A-side}
Let $n=2\cdot \ell \ge 4$, $\p=(a_1,\dots,a_{\ell},\, b_1,\dots,b_{{\ell}})\notin S_{\ell}$ with $s^* =\min(a_1,b_{\ell})$ and $A-B=\Delta >0$.  Then the $\Delta$-Algorithm returns an option $\r \in S_{\ell}$  that can be reached by an $A$-side move. 
\end{lemma}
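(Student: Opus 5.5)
The plan is to show that the two ways the $\Delta$-Algorithm can return correspond to two legal $A$-side moves into $S_{\ell}$. The two ways are termination with $\Delta=0$, and the fallback branch in which $a_1(\r)$ is set to $s_{\ell+1}$. Throughout I use Remark~\ref{rem:algs}.\ref{itm:impactind}: reducing $a_j$ with $j\ge 2$ lowers exactly the sums $s_2,\dots,s_j$, each by the same amount. The loop only touches $a_j$ with $2\le j\le t-1$, where $t$ is the index of the minimum window. Hence $s_t,\dots,s_{\ell+1}$ never change, and in particular $s_t=s^*$ is preserved.

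\textbf{Invariant of the loop.} By Remark~\ref{rem:algs}.\ref{itm:maxdedind}, the choice $d\le m_j$, together with the updates $m_i\gets m_i-d$, ensures that every $s_i$ with $i<t$ stays at least $s^*$. Consequently $(s^*)'=s^*$ after each step. Since $a_1$ and $b_{\ell}$ are untouched in the loop, $m'=m=s^*$ as well, so (ME) holds for $\r$ at every stage. Each step also lowers $A$ by $d\le\Delta$ while $B$ is fixed, so $A(\r)-B$ equals the current $\Delta\ge 0$. The moves are all on $a_2,\dots,a_{t-1}\subseteq\{a_1,\dots,a_{\ell}\}$, so $\r$ is reachable by an $A$-side move, and it differs from $\p$ as soon as some $d>0$. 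Therefore, if the loop ends with $\Delta=0$, then (SE) and (ME) both hold and $\r\in S_{\ell}$.

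\textbf{The fallback branch.} This is the main obstacle: I must show that when the loop exhausts $j$ with $\Delta>0$, the assignment $a_1(\r)=s_{\ell+1}=b_1+\dots+b_{\ell-1}$ is a legal reduction and lands in $S_{\ell}$. My first step is to prove that at this point $s_2(\r)=s^*$. For $t=2$ this is immediate. For $t>2$ I argue from the loop: each visited $a_j$ either became $0$ or was cut by exactly the current $m_j$, which forces some $s_i$ with $i\le j$ to equal $s^*$. Working downward from $j=t-1$ to $j=2$, I expect the last active constraint to propagate to $m_2=0$, that is, $s_2(\r)=a_2(\r)+\dots+a_{\ell}(\r)=s^*$. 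The alternative is that $a_2,\dots,a_{t-1}$ were all zeroed; in that case I would fall back on Remark~\ref{rem:algs}.\ref{itm:consec0}-type reasoning to derive the same equality or a contradiction. This propagation is the step I would check most carefully. If $s_2(\r)>s^*$ can genuinely occur, I would modify the fallback to also reduce further $a_j$.

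\textbf{Finishing the fallback.} Granting $s_2(\r)=s^*$, the remaining surplus satisfies $\Delta=a_1+s^*-s_{\ell+1}-b_{\ell}>0$. Suppose first that $m=a_1<b_{\ell}$. Then $s^*=a_1$, and $\Delta>0$ gives $2s^*>s_{\ell+1}+b_{\ell}\ge s^*+b_{\ell}$, so $s^*>b_{\ell}$, a contradiction. Hence $m=b_{\ell}=s^*$, and $\Delta>0$ becomes $a_1>s_{\ell+1}$, so lowering $a_1$ to $s_{\ell+1}$ is a legal reduction, still within the same $A$-side move. For (SE), the new sum is $A'=s_{\ell+1}+s_2(\r)=(b_1+\dots+b_{\ell-1})+b_{\ell}=B$. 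For (ME), the sums $s_i$ do not involve $a_1$, so $(s^*)'=s^*=b_{\ell}$. Moreover $s_{\ell+1}\ge s^*=b_{\ell}$ gives $m'=\min(s_{\ell+1},b_{\ell})=b_{\ell}$. Hence $\r\in S_{\ell}$, which completes the plan.
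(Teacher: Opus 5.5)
Your loop invariant ((ME) preserved, $\Delta=A(\r)-B$, only $A$-side stacks touched) and your closing computation in the fallback branch are correct and match the paper's proof. The gap is exactly the step you flagged: you never prove that $s_2(\r)=s^*$ when the loop ends with $\Delta>0$. You state an expectation and allow that the algorithm might need modifying. Since the lemma is about the algorithm as written, it is not established.

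The tool you propose for the ``all zeroed'' alternative also does not fit. Remark~\ref{rem:algs}.\ref{itm:consec0} requires $\delta=s^*-m>0$, whereas here $s^*=m$. Your dichotomy also misses a mixed case. A cut by $m_j$ pins some window $s_i$ with $i\le j$, but possibly with $i>2$. Nothing you say forces the pinned index down to $2$ as the lower stacks are processed.

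The missing ingredient is the adjacent-window identity $s_j-s_{j+1}=a_j-b_{j-1}$ for $2\le j\le \ell$. With it, the propagation you hoped for is a short induction over the loop. Before the step with index $j$, let $i^*$ be the smallest index with $s_{i^*}(\r)=s^*$; the claim is $i^*\le j+1$, and initially $i^*=t$.
\begin{itemize}
\item If $i^*\le j$, the current $m_j$ is $0$, so $d=0$ and nothing changes.
\item If $i^*=j+1$, then $m_j\le s_j(\r)-s^*=a_j-b_{j-1}\le a_j$, because $a_j$ is still untouched when visited. Hence $d=\min(\Delta,m_j)$. Either $\Delta$ reaches $0$, or $d=m_j$ pins a window of index at most $j$.
\end{itemize}
Pinned windows stay pinned, because no $s_i$ is ever pushed below $s^*$. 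After the step $j=2$ this yields $s_2(\r)=s^*$. It also shows that the case $d=a_j<m_j$ you worried about never occurs.

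The paper argues by contradiction instead. It assumes $a_2(\r)=0$ with $s_2(\r)>s^*$, takes the first positive stack $a_{t'}(\r)$, and telescopes to $s_2(\r)=s^*-(b_1+\dots+b_{t'-2})\le s^*$. Either way, the telescoping relation between consecutive windows is the idea your plan lacks.
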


\begin{proof}
 The $m_i$ are defined as in Remark~\ref{rem:algs}.\ref{itm:maxdedind} with $g=s^*$.  The definition of $d$ ensures that the move is legal, at most $\Delta$ tokens are removed, and none of the $s_i$  fall below $s^*$.  There are three possible cases for the values of the vector $\r$ at the end of the loop with index $j$: 
\begin{itemize}
    \item $d=a_j(\p)<m_j$: then $s_j(\r) >s^*$ and $a_j(\r)=0$. 
     \item $d=\Delta$: the algorithm stops and returns position $\r \in S_{\ell}$.
     \item $d=m_j\leq a_j(\p)$: then $s_j(\r) =s^*$ and $a_j(\r) \geq 0$.   
\end{itemize} 
If the While loop stops with $\Delta = 0$, then the resulting position satisfies both (SE) and (ME), thus $\r \in S_{\ell}$. Now assume the While loop stops with $j=2$ and $\Delta>0$. If $d=a_2(\p)<m_{2}$, then $s_2(\r)>s^*$ and $a_2(\r)=0$. Then there is a smallest index  $3\leq t' \leq t$ such that $a_{t'}(\r)>0$ (and hence $s_{t'}(\r)=s^*$). If $t'=\ell+1$, then $s_2=0<m$, a contradiction. If $t'\leq \ell$, then
\begin{align*}
    s_2&=a_2+\dots +a_{t'-1}+a_t'+\dots +a_{\ell}\\
    & = a_{t'}+\dots +a_{\ell}+b_1+\dots +b_{t'-2}-(b_1+\dots+b_{t'-2})\\
    & =  s^*-(b_1+\dots+b_{t'-2}) \leq s^*,
\end{align*}
a contradiction to $s_2(\r)>s^*$, so this case cannot happen. 
Finally, if $d=m_2\leq a_2(\p)$, then $s_2(\r)=s^*$ and $A=a_1+s_2=a_1+s^*$. 
If $m=a_1 <b_{\ell}$, then $A<b_\ell+s_{\ell+1}=B$, a contradiction.
 Thus $s^*=m=b_{\ell}$ and $A-B=a_1+s^*-(b_{\ell}-s_{\ell+1})>0$ implies that $a_1>s_{\ell+1}$, so setting $a_1(\r)=s_{\ell+1}$ results in $A=B$ while maintaining $m=\min(a_1(\r),b_{\ell})$. All moves are on the $A$-side, so we have a legal move to $\r \in S_{\ell}$.
\end{proof}

\begin{example} \label{ex:DdD2} We continue Example~\ref{ex:DdD1}. Position $\r=(4,21,0,0,0,4,2,7,6,5)$ with $\Delta=1$ now becomes the input to the  $\Delta$-Algorithm.  The \Dd-Algorithm ended with $m_3=0$, which implies that  $s_3=s^*$ and $t=3$.  Therefore, the $\Delta$-Algorithm starts with $j=2$ and $m_2=17$. We obtain $d=\min(a_2(\p),\Delta,m_2)=\min(21,1,17)=1$, resulting in $a_2(\r)=20$ and $\Delta=0$. The algorithm returns $\p'=(4,20,0,0,0,4,2,7,6,5)\in S_5$.
\end{example}

We now turn to the $\delta$-Algorithm, which fixes the (ME) constraint.

\begin{lemma} \label{lem:d} Let $n=2\cdot \ell \ge 4$, $\p=(a_1,\dots,a_{\ell},\, b_1,\dots,b_{{\ell}})\notin S_{\ell}$ with $A-B=\Delta =0$ and $m=\min(a_1,b_{\ell})<s^*$.  Then the $\delta$-Algorithm returns a position $\r$  with $\delta \in\{0,1\}$. Specifically, the resulting position $\r$ is of the form
$$(a_1(\p),\dots,a_{x-1}(\p),\underbrace{a_x(\r),0,\dots,0,b_y(\r)}_{\text{Stacks played on}},b_{y+1}(\p),\dots,b_{\ell}(\p))$$ 
with $y < x$, that is, play has occurred on at most  $\ell$ stacks, with $a_x(\r)>0$ and $b_y(\r)>0$. 
\end{lemma}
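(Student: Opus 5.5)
The plan is to treat the $\delta$-Algorithm as the symmetric analogue of the \Dd-Algorithm. I assume it works inward from the center on both sides at once: an $A$-side pointer $x$ starting at $\ell$ and decreasing, and a $B$-side pointer $y$ starting at $1$ and increasing. At each step it removes the \emph{same} number $d$ of tokens from $a_x$ and from $b_y$. The first step is to record the invariants this enforces.

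First, since equal amounts are removed from each half, $\Delta=0$ is preserved throughout, so (SE) is never broken. Second, by Remark~\ref{rem:algs}.\ref{itm:impactind}, reducing $a_x$ by $d$ lowers $s_i$ for $2\le i\le x$, and reducing $b_y$ by $d$ lowers $s_i$ for $y+2\le i\le \ell+1$. While $y+2\le x$, the windows $s_i$ with $y+2\le i\le x$ lose $2d$ and all other windows lose $d$. I take $d$ to be bounded by $a_x$, by $b_y$, by the maxima $m_x$ and $m_y$ of Remark~\ref{rem:algs}.\ref{itm:maxdedind} (with $g=m$), and by $\lfloor \delta/2\rfloor$ whenever the affected windows overlap (Remark~\ref{rem:algs}.\ref{itm:m_l+1}). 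These bounds guarantee that no $s_i$ drops below $m$. Since $a_1,b_\ell$ are never touched, $m$ is fixed, so $\delta=s^*-m$ stays $\ge 0$ and is nonincreasing.

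Next I would do the case analysis of a single step, as in Lemma~\ref{lem:Dd}. Either $d$ equals $a_x$ (then $a_x(\r)=0$ and $x$ decreases), or $d$ equals $b_y$ (then $b_y(\r)=0$ and $y$ increases), or $d$ equals one of the window bounds. The window-bound case makes some $s_i$ equal to $m$, giving $\delta=0$. The exception is when the binding window lost $2d$ and $\delta$ was odd: then the $\lfloor\delta/2\rfloor$ cap leaves $\delta=1$, which explains the conclusion $\delta\in\{0,1\}$.

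It remains to show the algorithm stops in this way before the pointers cross, which gives the shape of $\r$. Stacks strictly between $b_y$ and $a_x$ in the final position are exactly the exhausted ones, hence zero. The played block $a_x,\dots,a_\ell,b_1,\dots,b_y$ consists of $\ell-x+1+y$ stacks, which is at most $\ell$ precisely when $y<x$. I also have to confirm $a_x(\r)>0$ and $b_y(\r)>0$. The natural route is to define $x$ and $y$ as the indices at which the algorithm halts, noting that a halt caused by a window bound need not exhaust the stack.

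The main obstacle is excluding the bad terminations: the pointers meeting, or one side running out. I would argue by contradiction using Remark~\ref{rem:algs}.\ref{itm:consec0}. Throughout, $s^*\ge m>0$. If the zeros produced ever covered $\ell-1$ consecutive stacks among $a_2,\dots,b_{\ell-1}$, some window $s_i$ would be $0<m$, contradicting that no window falls below $m$. A careful count is needed here. The zero block $a_{x+1},\dots,b_{y-1}$ has $\ell-x+y-1$ stacks, so $y\ge x$ would force a window of zeros, and the same count also shows the final $a_x$ and $b_y$ cannot both be exhausted. A parity check on the overlap region, to justify the $\lfloor\delta/2\rfloor$ cap yielding exactly $\delta\in\{0,1\}$, would complete the argument.
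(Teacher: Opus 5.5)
Your core idea matches the paper's. Equal removals keep $\Delta=0$, no window ever drops below $m$, and a run of $\ell-1$ zeros would contradict Remark~\ref{rem:algs}.\ref{itm:consec0}. The gap is in how you get $\delta\in\{0,1\}$.

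\textbf{The step that fails.} You claim that once a window bound is the binding term, some $s_i$ hits $m$, or $\delta=1$ by parity, so the algorithm ends there. That is false when the binding term is the cap $\lfloor\delta/2\rfloor$. Only windows with $y+2\le i\le x$ lose $2d$. If the minimal window lies outside that range it loses only $d$, so $\delta$ drops only to about $\lceil\delta/2\rceil$ while $a_x$ and $b_y$ both stay positive.

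For example, take $\p=(1,0,10,10,0,1)$ with $\ell=3$. Then $m=1$, $(s_2,s_3,s_4)=(10,20,10)$ and $\delta=9$. The first step, under your rule or the paper's, has $d=4$ and leaves $\delta=5$ with $a_3=b_1=6$. Play then continues on the same pair through $\delta=3,2,1$. In the paper's algorithm, Example~\ref{ex:Ddd2} shows the same thing at $x=3$, $y=2$, where $\delta$ goes $3\to2\to1$.

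In the actual $\delta$-Algorithm, $d=\min(a_x,b_y,\lfloor\delta/2\rfloor)$ in every iteration, and the loop runs while $\delta>1$, $x\ge2$, $y\le\ell-1$. So $\delta\in\{0,1\}$ is simply the exit condition, not a parity effect. At $\delta=1$ the cap is $0$, which is why the cheat sheet needs an extra adjustment. What must be shown is that the loop terminates and never exits via $x<2$ or $y>\ell-1$; your one-step analysis supplies neither.

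\textbf{How to repair it with your own count.} State the zero-block count as an invariant after every pointer update. The stacks $a_{x+1},\dots,a_\ell,b_1,\dots,b_{y-1}$ are zero, so $y\ge x$ would force $s_{x+1}=0<m$. Hence $1\le y<x\le\ell$ throughout, and the other two exit conditions never fire.

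The same invariant shows that every window loses at least $d$ per iteration. When $y\le x-2$ this is your overlap computation. When $y=x-1$, the ranges $[2,x]$ and $[x+1,\ell+1]$ together cover all windows. So each iteration either advances a pointer (when $d=0$) or lowers $\delta$ by at least $1$. This gives termination, necessarily with $\delta\le1$.

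\textbf{Positivity of the flanking stacks.} Showing that $a_x$ and $b_y$ ``cannot both be exhausted'' does not give $a_x(\r)>0$ and $b_y(\r)>0$. The final pointer can rest on a stack that was zero from the start. For instance, $\p=(1,2,0,1,2,1,0,1)$ with $\ell=4$ stops after one step with $x=3$ and $a_3=0$. Instead, take $x$ and $y$ to be the nearest positive stacks flanking the zero block. They exist and satisfy $y<x$ by the same zero-window argument.

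All of this needs $m>0$. The lemma leaves this implicit, and the paper also relies on it through the Remark.
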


\begin{minipage}{.9\linewidth}
\begin{algorithm}[H]
\caption{$\delta$-Algorithm}
\label{alg:delta}
\begin{algorithmic}[1]
\Require $\p=(a_1,\dots,a_\ell,b_1,\dots,b_\ell)$ with $A=B$ and $s^*-m>0$
\State $\r \gets \p$
\State $m \gets \min(a_1,b_{\ell})$
\State Compute the quantities $s_i$ for $i=2,\dots,\ell+1$ 
\State Compute the quantities $m_j=\min_{2 \leq i \leq j}(s_i-m)$ for $j=2,\dots,\ell+1$
\State $\delta\gets m_{\ell+1}$
\State $x \gets \ell$,  \quad $y \gets 1$
\While{$\delta>1$ \textbf{and} $x\ge2$ \textbf{and} $y \le\ell-1$}
     \State $d \gets \min(a_x(\p),\, b_y(\p),\,  \lfloor \delta/2 \rfloor)$
    \If{$d>0$}
        \State $a_x(\r)\gets a_x(\p)-d$
        \State $b_y(\r)\gets b_y(\p)-d$
        \State $m_i \gets m_i-d$ for $i=2,\dots, x$
        \State $m_i \gets \min(m_i-d,m_{i-1})$ for $i=y+2,\dots, \ell+1$ \Comment{Separate loop}
        \State $\delta \gets m_{\ell+1}$  
    \EndIf
    \If{$a_x(\r)=0$}
        \State $x\gets x-1$
    \EndIf
    \If{$b_y(\r)=0$}
        \State $y\gets y+1$
    \EndIf
\EndWhile
\State \Return $\r$
\end{algorithmic}
\end{algorithm}
\end{minipage}
\vspace{0.5cm}

\begin{proof} The definition of $d$ ensures that the simultaneous reductions on two stacks keep $\delta\geq 0$
  (Remarks~\ref{rem:algs}.\ref{itm:impactind}~-~\ref{rem:algs}.\ref{itm:m_l+1}) and no stack falls below zero. In each step, the  $m_i$ values are reduced by at least $d$; those in the overlap of the affected sums are reduced by $2d$ and one needs to make sure to maintain the relationship that $m_j=\min_{2\leq i \leq j}(s_i-m)$. In the first loop (adjustment based on $a_x$), all $m_j$ get reduced by the same amount, so the defining relationship remains intact. In the second loop, those stacks in the overlap get reduced  by $2d$ overall, while the subsequent values of $m_j$ are only reduced by $d$. The assignment $m_i=\min(m_i-d,m_{i-1})$ takes care of the requirement that  $m_j=\min_{2\leq i \leq j}(s_i-m)$. 
  
  In each iteration, one of three things happens: 1) one of the stacks $a_x$ and $b_y$ is reduced to zero and its index is adjusted, creating a consecutive block of zeros, 2) $\delta$ is reduced to either zero or one, or 3) $a_x$ and $b_y$ are at distance $\ell-1$. In that case, there is no overlap between affected stacks and each $m_i$ is reduced by $d$ only, so $\delta$ is reduced to approximately half of its value. Also,  $a_x$ and $b_y$ remain positive, so no index is adjusted, and the reduction in the next step is on the same stacks.  Neither one of $a_x$ or $b_y$ can be reduced to zero before $\delta$ has been reduced  to either one or zero in subsequent steps because otherwise, there would be a sequence of $\ell-1$ consecutive zeros which is impossible by Remark~\ref{rem:algs}.\ref{itm:consec0}. Hence the algorithm cannot adjust more than $\ell$ adjacent stacks, the two end stacks  $a_x$ and $b_y$ must be non-negative, and $\delta \in\{0,1\}$ in the returned position. 
\end{proof}

\begin{example} \label{ex:Ddd2} We continue Example~\ref{ex:Ddd1}. Position $\r=(2,15,8,2,0,4,5,5,5,8)$ with $\delta=9$, $m=2$ and $(s_2,s_3,s_4,s_5,s_6)=(25,14,11,14,19)$ now becomes the input $\p$ to the  $\delta$-Algorithm.   For each iteration, we highlight the adjustments to the $m_i$ due to a reduction of $a_x$ in the row labeled with the value of $x$, and likewise for the effect of reduction in $b_y$. Values that do not get affected by the second adjustment remain the same, but are not repeated for ease of readability. Initially, $x=5$ and $y=1$. Since $d=0$, there is no change in values, except $x$ is reduced to $4$. (This iteration is not shown in the table.)

\begin{table}[!htb]
\centering
\begin{tabular}{|l||l|l|l|l|l||l||l|}
\hline
\rowcolor[HTML]{DAE8FC} 
$x,y$ & $m_2$ & $m_3$ & $m_4$ & $m_5$ & $m_6$ & $\delta$ & \multicolumn{1}{c|}{\cellcolor[HTML]{DAE8FC}$\r$} \\ \hline
      & $23$  & $12$  & $9$   & $9$   & $9$   & ${\bf 9}$      & $(2,15,8,2,0,4,5,5,5,8)$                          \\ \hline\hline
$x=4$ & $21$  & $10$  & $7$   &       &       &          &                                                   \\ \hline
$y=1$ &       & $8$   & $5$   & $5$   & $5$   &    ${\bf 5}$      &    $(2,15,8,{\bf 0},0,{\bf 2},5,5,5,8)$                                                \\ \hline\hline
$x=3$ & $19$  & $6$   &       &       &       &          &                                                   \\ \hline
$y=1$ &       & $4$   & $3$   & $3$   & $3$  & ${\bf 3}$      & $(2,15,{\bf 6},0,0,{\bf 0},5,5,5,8)$        \\ \hline \hline
$x=3$ & $18$  & $3$   &       &       &       &          &                                                   \\ \hline
$y=2$ &       &       & $2$   & $2$   & $2$  & ${\bf 2}$      & $(2,15,{\bf 5},0,0,0, {\bf 4},5,5,8)$            \\ \hline\hline
$x=3$ & $17$  & $2$   &       &       &       &          &                                                   \\ \hline
$y=2$ &       &       & $1$   & $1$   & $1$   & ${\bf 1}$      & $(2,15,{\bf 4},0,0,0, {\bf 3},5,5,8)$       \\ \hline
\end{tabular}
\end{table}

When $x=4$ and $y=1$, then $d=\min(a_x(\p),b_y(\p),\lfloor \delta/2 \rfloor)=\min(2,4,4)=2$, so $a_4(\r)=0$ and $b_1(\r)=2$. Now we recompute the $m_i$. Values affected by reduction of stack $a_4$ get reduced by  $2$. In the second set of adjustments, $m_3$ and $m_4$ both are reduced by $d=2$, while for $m_5$ and $m_6$, we have $m_i=\min(m_i-d,m_{i-1})=\min(7,5)=5$. Then $\delta=m_6=5$ (an overall reduction of $2d$), $x=3$, and the resulting position is $\r=(2,15,8,0,0,2,5,5,5,8)$. 
The next iteration has $d=\min(8,2,2)=2$, so $a_3(\r)=6$, $b_1(\r)=0$, $\delta=3$ (a reduction of $d$ only), $y=2$, and   $\r=(2,15,6,0,0,0,5,5,5,8)$. 
Now $x=3$ and $y=2$, so $d=\min(6,5,1)=1$. Thus $a_3(\r)=5$, $b_2(\r)=4$, $\delta = 2$ (reduction of $d$ only), and $\r=(2,15,5,0,0,0,4,5,5,8)$. Note that we continue to play on the same stacks, and obtain $d=\min(5,4,1)=1$, leading to $a_3(\r)=4$, $b_2(\r)=3$, and $\delta=1$. The algorithm ends with position $\r=(2,15,4,0,0,0,3,5,5,8)$. Note that there are $\ell-1$ consecutive zeros, surrounded by two stacks with positive stack height. The minimum window is $s_4=3$. We will explain in the next subsection how to make an additional adjustment that results in $\delta=0$. 
\end{example}

\subsection{Cheat Sheet}
Now that we have the necessary algorithms that create $A$-side and middle moves to  satisfy both the (SE) and (ME) conditions, we can summarize how to play. We distinguish by whether $m=\min(a_1,b_{\ell}) \geq s^*$ (Case 1) or not (Case 2). In the first case, the moves depend both on the relative sizes of $\Delta$ and $\delta$ and on the endpoint at which the minimum $m$ occurs.  In the second case, we will rely heavily on the three algorithms in combination with each other.   In both cases,  we justify that the indicated moves are legal and lead to a position in $S_{\ell}$.\\

 We start with Case 1. The table below indicates how to play  by listing the stacks of $\p'$ that have changed compared to $\p$.

\begin{table}[!htb]\centering
\begin{tabular}{|l|l||ll|}
\cline{3-4}
\multicolumn{2}{c|}{}  
     & \multicolumn{2}{c|}{\cellcolor[HTML]{DAE8FC}{\bf Case 1}: $\delta=m-s^*$}\\
\cline{3-4}
\multicolumn{2}{c|}{}                                             & \multicolumn{1}{l|}{ ~~~~~~~~~~$\delta \geq \Delta$}                  & ~~~~~~~~~~~~~~$\delta < \Delta$                                       \\  \hline
{\cellcolor[HTML]{DAE8FC}1.1 }& $m=a_1$                                                                     & \multicolumn{1}{l|}{\begin{tabular}[c]{@{}l@{}}$a_1'=a_1-\delta=s^*$\\ $b_{\ell}'=b_{\ell}-(\delta-\Delta)$\end{tabular}}                                                  & \begin{tabular}[c]{@{}l@{}}First set $a_1=s^*$, then apply \\ $\Delta$-Algorithm with $\Delta=A'-B$\end{tabular} \\ \hline
{\cellcolor[HTML]{DAE8FC}1.2} & \begin{tabular}[c]{@{}l@{}}$m=b_{\ell}$, \\ $a_1 \geq m+\Delta$\end{tabular}  & \multicolumn{2}{c|}{$a_1'=a_1-\Delta-\delta, \quad b_{\ell}'=b_{\ell}-\delta=s^*$}                            \\ \hline
{\cellcolor[HTML]{DAE8FC}1.3 }& \begin{tabular}[c]{@{}l@{}}$m=b_{\ell}$, \\ $m < a_1 < m+\Delta$\end{tabular} & \multicolumn{1}{l|}{\begin{tabular}[c]{@{}l@{}}$\Delta':=\Delta+m-a_1$\\ $a_1'=b_{\ell}-\delta=s^*$\\ $b_{\ell}'=b_{\ell}-(\delta-\Delta')$\end{tabular}} & \begin{tabular}[c]{@{}l@{}}First set $a_1=m$. \\ Then apply case 1.1.\end{tabular}                                                  \\ \hline
\end{tabular}
\caption{The winning move when $\delta=m-s^* \ge 0$}
 \label{tab:case1}
\end{table}

\noindent {\bf Case 1}: $\Delta=A-B>0$ and $\delta = m-s^*\geq 0$. 
\begin{enumerate}
    \item [1.1] When $\delta \geq \Delta$, then $a_1'=a_1-\delta \leq b_{\ell}-\delta \leq b_{\ell}'$, so (ME) holds. Also, $A'=A-\delta=B+\Delta -\delta=B'$, so (SE) holds. When $\delta < \Delta$, first reduce $a_1$ to $s^*$ to ensure (ME), then apply the $\Delta$-Algorithm to the resulting position. By Lemma~\ref{lem:A-side}, the additional reductions are on $A$-side stacks only and achieve (SE) while maintaining (ME).
    \item[1.2] Since $a_1 \geq m+\Delta$ we have that $a_1'\geq m-\delta=b_{\ell}'=s^*$, so (ME) holds. Also, $A'=A-\Delta-\delta=B-\delta=B'$ so (SE) holds. 
    \item[1.3] When $\delta \geq \Delta$, then  $m<a_1<m+\Delta$ implies that $0<\Delta'=\Delta+m-a_1<\Delta\leq \delta$. Thus the endpoint move on $a_1$ and $b_{\ell}$ to  $a_1'=s^*$ and $b_{\ell}'=b_{\ell}-(\delta-\Delta')\geq a_1'$  is legal and ensures (ME) as well as (SE) because $B'=B+\Delta' -\delta=A+m-a_1-\delta=A'$. On the other hand, when $\delta <\Delta$, then we first reduce $a_1$ to $m$. Since $a_1-m <\Delta$, we have $A'=A-a_1+m>A-\Delta=B$, that is, the intermediate position satisfies Case 1.1, so a move to $\p' \in S_{\ell}$ is available using either an endpoint move or an $A$-side move. 
\end{enumerate}

\noindent {\bf Case 2}: $\Delta=A-B \geq 0$ and $\delta = s^*-m >0$. In this case, $s^* >m$, so a pure endpoint move will not work as it does not change $s^*$. We may assume that $m>0$; otherwise, we play in the game \PN{2\ell-1}{\ell} for which we know the winning move from Theorem~\ref{thm:P-pos path}.  If $\Delta=0$, then apply the $\delta$-Algorithm to $\p$. By Lemma~\ref{lem:d}, the resulting position $\r$ either has $\delta = 0$ (in which case $\r \in S_{\ell}$) or $\delta = 1$, in which case we make the same adjustment as described in Case 2.1 below. If $\Delta> 0$, we employ the \Dd-Algorithm to fix one of the two inequalities. There are two possible outcomes:
\begin{enumerate}  
      \item[2.1] The \Dd-Algorithm returns a position $\r$ with $\Delta(\r)=0$ and $\delta(\r) >0$. In this case apply the $\delta$-Algorithm to $\r=(a_1,\dots,a_{j-1},r_j,0,\dots,0,b_{1},\dots,b_{\ell})$ with $j \geq 2$, that is, play has occurred on exactly $\ell-j+1 \leq \ell-1$ stacks from the $A$-side, with $r_j \ge 0$. The $\delta$-Algorithm has $d=0$ until $x=j$, and then reduces stacks $r_j(\r)$ and $b_1(\r)$. By Lemma~\ref{lem:d}, the algorithm returns a position where at most $\ell$ stacks have been played on and $\delta \in \{0,1\}$. If $\delta=0$, then the resulting position is in $S_{\ell}$ and we are done. If $\delta=1$, then we need to make a further adjustment. The block of stacks that were played on consists of consecutive zeros flanked by non-zero stacks $a_x$ and $b_y$ on both sides. Let  $t$ be the  index of the minimum window, that is,  $s_t=s^* =m+1$ and $s_i>s^*$ for $i<t$. The block of consecutive zeros must belong to $s_t$. By Remark~\ref{rem:algs}.\ref{itm:impactind}, $a_x$ is a summand in $s_i$ for $i=2,\dots,x$, while $b_y$ belongs to sums $s_i$ with $i=y+2,\dots,\ell+1$. Since $y<x$ by Lemma~\ref{lem:d}, we have that $y+2\leq x+1$, so at least one of $a_x$ and $b_y$ is a summand of the minimum sum $s_t$.  If exactly one of $a_x$ and $b_y$ belongs to the minimum window, then reduce both stacks by one. This will make $s^*=m$ while maintaining $A=B$.   If both $a_x$ and $b_y$ belong to the minimum window, then by definition of $t$  we have that $a_{t-1}>0$. Reduce $a_{t-1}$ and $b_y$ each by one to achieve (ME) and maintain (SE). The resulting position is in $S_{\ell}$.
      
       \item[2.2] The \Dd-Algorithm returns a position $\r$ with $\delta(\r)=0< \Delta(\r)$. If $a_1(\r) \geq m+\Delta(\r)$, set $a_1'(\r)=a_1(\r)-\Delta(\r)$. Otherwise, use the $\Delta$-Algorithm with $\r$. The respective resulting positions in these two cases have undergone only $A$-side moves and satisfy both (ME) and (SE).
\end{enumerate}

\begin{example} In Example~\ref{ex:Ddd2}, the $\delta$-Algorithm ended with position $\r=(2,15,4,\allowbreak 0,\allowbreak 0,\allowbreak 0, \allowbreak3,5,5,8)$ with $\delta=1$, $m=2$, $s^*=s_4=3$, and minimum window $(0,0,0,3)$. Here $a_x=4$ (outside the minimum window) and $b_y=3$ (inside the minimum window). Hence we reduce both stacks by $1$, resulting in option $\p'=(2,15,3,0,0,0,2,5,5,8)\in S_{\ell}$ with $A'=20=B'$ and $m'=2=(s^*)'$. 
\end{example}

\section{Conclusion and Future Work}\label{sec:conclusion}

We have solved  \NN{n}{k} games with $k \geq \lc n/2 \rc$. The remaining \ruleset{NecklaceNim} games with $k < \lc n/2 \rc$ have $G \defeq \{\{a, b\}, \{b, c\}, \{d\}\}$ as a
sub-game. To see this, consider the subspace of positions $\p \in \NN{n}{k} $ where all but stacks $p_{n-1}$, $p_n$, $p_1$, and $p_{\lfloor n/2\rfloor}$ and  have been reduced to zero. 
Since $k<\lc n/2\rc$, there are at least $k-1$ stacks strictly between the stacks $p_1$ and $p_{\lfloor n/2\rfloor}$ along the necklace, and likewise at least $k-1$ stacks strictly between stacks $p_{\lfloor n/2\rfloor}$ and $p_{n-1}$. Thus we can either play on just stack $p_{\lfloor n/2\rfloor}$ or on one of the pairs  $\{ p_{n-1},p_n\}$ or $\{p_n, p_1\}$. The game resulting after zero-reduction is $$G\defeq \{\{ p_{n-1},p_n\}, \{p_n, p_1\}, \{p_{\lfloor n/2\rfloor}\}\}\defeq \{\{a,b\}, \{b,c\}, \{d\}\}.$$ Thus solving $G$ is necessary to determine the \P-positions of the remaining \ruleset{NecklaceNim} games.

\begin{open} Solve the game \SN{4}{\{\{a,b\}, \{b,c\}, \{d\}\}}. 
\end{open}

\ruleset{NecklaceNim} games  arise in the subspace of positions of \ruleset{CircularNim} with $k-2$ consecutive zeros, just like \ruleset{PathNim} games arise when there is a string of $k-1$ consecutive zeros in \ruleset{CircularNim}.  This observation gives rise to more general \ruleset{NecklaceNim} games, where the ``clasp" is broader, corresponding to shorter strings of consecutive zeros in \ruleset{CircularNim}. 

\begin{definition} The game \ruleset{NecklaceNim} \NNg{n}{k}{c} with $n \geq k$ and $2 \leq c \leq \lf n/2 \rf+1$ is defined as the \ruleset{SetNim} game \SN{n}{\A} on the vertex set $V=\{1,\dots,n\}$ with $\A=\A_1 \cup \A_2$, where $\A_1$ represents the move set of \PN{n}{k} (the necklace) and $\A_2$ represents the move set of \PN{2\cdot(c-1)}{c} on the path 
\[(n-c+2) \,\text{---}\, (n-c+1) \,\text{---}\,  \dots \,\text{---}\, n \,\text{---}\, 1 \,\text{---}\, \dots \,\text{---}\, (c-1),\]
which represents the clasp of size $c$.
\end{definition}

Note that \NNg{n}{k}{2} $\cong$ \NN{n}{k}. Furthermore, for $s=k-c$, \NNg{n}{k}{c} arises as a sub-game of \CN{n+s}{k} on the space of positions that have  $s$ consecutive zeros.

\begin{open} Solve any of the \ruleset{NecklaceNim} \NNg{n}{k}{c} games.
\end{open}

\end{document}